
\documentclass[12pt,a4paper,twoside]{article}



\oddsidemargin=1cm
\setlength{\textwidth}{6.5in}
\addtolength{\voffset}{50pt}

\evensidemargin=1cm
\setlength{\textwidth}{6.5in}
\addtolength{\voffset}{50pt}

\usepackage{fancyhdr}

\pagestyle{fancy}

\fancyhead{}

\fancyhead[CO]{Brownian motion in a truncated Weyl chamber}

\fancyhead[CE]{Wolfgang K\"onig and Patrick Schmid}

\usepackage{latexsym}
\usepackage{amsmath}
\usepackage{verbatim}
\usepackage{amsthm}
\usepackage{amssymb}
\parindent0mm
\def\R{{\ifmmode{\rm I}\mkern-4mu{\rm R}
\else\leavevmode\hbox{j}\kern-.17em\hbox{R}\fi}}
\def\N{{\ifmmode{\rm I}\mkern-3.5mu{\rm N}
\else\leavevmode\hbox{j}\kern-.16em \hbox{N}\fi} }
\def\C{\ifmmode{{\rm C}\mkern-15mu{\phantom{\rm t}\vrule}}\mkern10mu
\else\leavevmode\hbox{C}\kern-.5em\hbox{j}\kern.3em\fi}

\def\Q{{\ifmmode{\rm I}\mkern-7.5mu{\rm Q}
\else\leavevmode\hbox{j}\kern-.17em\hbox{Q}\fi}}

\newcommand{\sfrac}[2]{\mbox{$\frac{#1}{#2}$}}
\def\1{{\mathchoice {1\mskip-4mu\mathrm l}      
{1\mskip-4mu\mathrm l} 
{1\mskip-4.5mu\mathrm l} {1\mskip-5mu\mathrm l}}} 
\newcommand{\ssup}[1] {{{\scriptscriptstyle{({#1}})}}} 

\renewcommand{\d}{{\rm d}}
\newcommand{\e}{{\rm e}}
\renewcommand{\phi}{\varphi}

\newcommand{\heap}[2]{\genfrac{}{}{0pt}{}{#1}{#2}} 
\newcommand{\eps}{\varepsilon}
\renewcommand{\epsilon}{\varepsilon}

\newcommand{\sign}{{\rm sign}}

\renewcommand{\P}{\mathbb{P}}

\newcounter{numerator}%

\setlength{\textheight}{23cm} \setlength{\textwidth}{16.5cm}
\setlength{\parindent}{0mm} \setlength{\parskip}{0mm}
\setlength{\topmargin}{-3mm} \hoffset-10mm \voffset-10mm
\newtheorem{theorem}{Theorem}[section]
\newtheorem{corollary}[theorem]{Corollary}
\newtheorem{proposition}[theorem]{Proposition}
\newtheorem{lemma}[theorem]{Lemma}
\newtheorem{remark}[theorem]{Remark}

\newcommand{\Sym}{\mathfrak{S}}

 \setcounter{secnumdepth}{5}

\parindent=0mm
\parskip=0.33\baselineskip

\begin{document}

\title{Brownian motion in a truncated Weyl chamber}

\author{Wolfgang K\"onig and Patrick Schmid}

\date{August 18, 2010}

\maketitle

\abstract{We examine the non-exit probability of a multidimensional Brownian motion from a growing truncated Weyl chamber. Different regimes are identified according to the growth speed, ranging from polynomial decay over stretched-exponential to exponential decay. Furthermore we derive associated large deviation principles for the empirical measure of the properly rescaled and transformed Brownian motion as the dimension grows to infinity. Our main tool is an explicit eigenvalue expansion for the transition probabilities before exiting the truncated Weyl chamber.}

\bigskip

\noindent{\it MSC2010.} 60J65, 60F10

\medskip

\noindent{\it Keywords and phrases}. Weyl chamber, non-colliding Brownian motions, Karlin-McGre\-gor formula, non-colliding probability, non-exit probability, eigenvalue expansion, r\'{e}duite.

\bigskip

\section{Introduction}\label{sec-Intro}

\noindent Our goal is to examine the non-exit probability of a Brownian motion from a growing truncated Weyl chamber for long times. Let $k\in\N$ be fixed and let $B=(B(t))_{t\in[0,\infty)}$ be a standard Brownian motion in $\R^k$. Furthermore, let $W=W_A=\{x = (x_1, \ldots, x_k) \in \mathbb{R}^k \colon x_1 < \ldots < x_k \}$ be the Weyl chamber of type A. Then it is well-known \cite{Gra} that the asymptotics of the probability not to exit $W$ for a long time is given by
\begin{equation}\label{Wasy}
 \P_x(B_{[0,t]}\subset W)\sim K h(x)t^{-\frac k4(k-1)},\qquad t\to\infty, \mbox{ for }x\in W,
\end{equation}
where the motion starts from $x\in\R^k$ under $\P_x$, $K$ is an explicit constant, and
\begin{equation}\label{Vandermonde}
h(x)=\prod_{1\leq i<j\leq k}(x_j-x_i)=\det\big[(x_i^{j-1})_{i,j=1,\dots,k}\big]
\end{equation}
denotes the well-known {\it Vandermonde determinant}. On the other hand, it is also well-known, see \cite{Stone} for example, that the non-exit probability from the bounded set $W\cap I^k$ with $I=(-\frac \pi2,\frac\pi 2)$ is asymptotically given as
\begin{equation}\label{Wboxasy}
\P_x(B_{[0,t]}\subset W\cap I^k)\sim \e^{-t\lambda^{\ssup{W\cap I^k}}} f^{\ssup{W\cap I^k}}(x)\langle f^{\ssup{W\cap I^k}},\1\rangle,\qquad t\to\infty, \mbox{ for }x\in W,
\end{equation}
where $\lambda^{\ssup U}$ denotes the principal eigenvalue and $f^{\ssup U}$ the corresponding positive $L^2$-normalised eigenfunction of $-\frac 12\Delta$ in an open bounded connected set $U\subset\R^k$ with Dirichlet (i.e., zero) boundary condition, and $\langle f,g\rangle $ denotes the standard inner product in $L^2(U)$. That is, the probability of not exiting from the Weyl chamber decays polynomially in time, while the one for the truncated Weyl chamber decays even exponentially.

The first main goal of this paper is to understand the transition from exponential to polynomial decay when replacing the box $I^k$ by the time-dependent box $r(t) I^k$ for different choices of a growth function $r\colon(1,\infty)\to (0,\infty)$. In particular, an interesting question is how the two functions $h$ and $f^{\ssup{W\cap I^k}}$ are transformed into each other. Is it true that the Vandermonde determinant is equal to a rescaled limit of the principal eigenfunction of $-\frac 12 \Delta$ in $W\cap I^k$?

It will turn out that, for $1\ll r(t)\ll \sqrt t$, the non-exit probability decays in a stretched-exponential way, but for $\sqrt t\ll r(t)$, the same asymptotics as in \eqref{Wasy} will hold, since the motion does not feel the boundary, according to the central limit theorem. However, the way in which the stretched-exponential decay becomes a polynomial decay when $r(t)\asymp \sqrt t$, is {\it a priori} not clear. This is one of the main topics of this paper. Here is a short version of our main result on this (see Theorem~\ref{thm-exit} and Proposition~\ref{lem-soft} for the full result).

\begin{theorem}\label{thm-main} For any $x\in W$ and any $r \in(0,\infty)$, as $t\to\infty$,
\begin{equation}
 \mathbb{P}_x\big(B_{[0,t]}\subset W \cap r(t)I^k\big) \sim h(x)
\begin{cases}K_0 r(t)^{-\frac k 2(k-1)} \e^{-t r(t)^{-2}\lambda^{\ssup {W\cap I^k}}}, &\mbox{if }1 \ll r(t) \ll \sqrt{t},\\
K_r t^{-\frac k 4(k-1)}, &\mbox{if }r(t)\sim r\sqrt{t},\\
K_{\infty} t^{-\frac k 4(k-1)}, &\mbox{if }\sqrt{t} \ll r(t).
\end{cases}
\end{equation}
Here $K_r\in(0,\infty)$ are constants for $r\in[0,\infty]$ such that 
\begin{equation}
\lim_{r\to \infty}K_r =K_{\infty}\qquad\mbox{and}\qquad
K_r\sim K_0 r^{-\frac k 2(k-1)} \e^{-r^{-2}\lambda^{\ssup {W\cap I^k}}}\quad\mbox{as }r\downarrow 0. 
\end{equation}
\end{theorem}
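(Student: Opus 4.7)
The plan is to use the explicit spectral expansion of the Dirichlet heat kernel on $W\cap rI^k$. Let $\phi_n$ denote the $L^2$-normalized Dirichlet eigenfunctions of $-\tfrac12\partial_x^2$ on $I$ with eigenvalues $\lambda_n^I=n^2/2$. By Karlin--McGregor antisymmetrization, an $L^2(W\cap I^k)$-orthonormal basis of Dirichlet eigenfunctions of $-\tfrac12\Delta$ on $W\cap I^k$ is indexed by strictly increasing multi-indices $\nu=(\nu_1<\cdots<\nu_k)\in\N^k$ via
\begin{equation*}
\Phi_\nu(x)=\det\bigl(\phi_{\nu_i}(x_j)\bigr)_{i,j=1,\ldots,k},\qquad \lambda_\nu=\tfrac12\sum_i\nu_i^2.
\end{equation*}
Brownian scaling then yields, for every $r>0$,
\begin{equation}\label{plan-spectral}
\P_x\bigl(B_{[0,t]}\subset W\cap rI^k\bigr)=\sum_\nu e^{-tr^{-2}\lambda_\nu}\,\Phi_\nu(x/r)\,\langle\Phi_\nu,\1\rangle,
\end{equation}
the inner product being taken on $W\cap I^k$. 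Two ingredients drive the analysis: (i) a Taylor expansion of $\Phi_\nu$ at the corner $0\in\partial W$ that isolates the Vandermonde $h$; (ii) the Karlin--McGregor determinantal formula for non-exit from $W$, used when the box $r(t)I^k$ becomes negligible.

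\textbf{Regimes~1 and~2.} Both satisfy $r(t)\to\infty$, hence $x/r(t)\to 0$. Expanding each entry $\phi_{\nu_i}(y_j)=\sum_m \phi_{\nu_i}^{(m)}(0)\,y_j^m/m!$ and applying multilinearity of the determinant together with the Vandermonde identity $\det(y_j^{\ell-1})_{j,\ell}=h(y)$, only distinct exponents contribute, and the minimum-degree choice $\{0,1,\ldots,k-1\}$ produces
\begin{equation*}
\Phi_\nu(y)=c_\nu\,h(y)+O\bigl(|y|^{k(k-1)/2+2}\bigr),\qquad c_\nu=\tfrac{1}{\prod_\ell(\ell-1)!}\,\det\bigl(\phi_{\nu_i}^{(\ell-1)}(0)\bigr)_{i,\ell}.
\end{equation*}
In regime~1, $tr(t)^{-2}\to\infty$ forces spectral-gap dominance of the ground state $\nu_*=(1,\ldots,k)$ in \eqref{plan-spectral}, yielding the claimed stretched-exponential asymptotics with $K_0=c_{\nu_*}\langle\Phi_{\nu_*},\1\rangle$. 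In regime~2, $tr(t)^{-2}\to r^{-2}$ and $x/r(t)\to 0$ act simultaneously; termwise substitution of $\Phi_\nu(x/r(t))\sim c_\nu\,r(t)^{-k(k-1)/2}h(x)$ and $r(t)^{-k(k-1)/2}\sim r^{-k(k-1)/2}t^{-k(k-1)/4}$ into \eqref{plan-spectral} delivers the stated polynomial asymptotics with
\begin{equation*}
K_r=r^{-k(k-1)/2}\sum_\nu e^{-r^{-2}\lambda_\nu}\,c_\nu\,\langle\Phi_\nu,\1\rangle.
\end{equation*}
The small-$r$ matching $K_r\sim K_0\,r^{-k(k-1)/2}e^{-r^{-2}\lambda^{\ssup{W\cap I^k}}}$ is then immediate by spectral-gap isolation of $\nu_*$.

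\textbf{Regime~3 and identification of $K_\infty$.} For $r(t)\gg\sqrt t$ the spectral series is no longer useful and I would pass to the Karlin--McGregor representation
\begin{equation*}
\P_x\bigl(B_{[0,t]}\subset W\cap r(t)I^k\bigr)=\int_{W\cap r(t)I^k}\det\bigl(q_t^{\ssup{r(t)}}(x_i,y_j)\bigr)_{i,j}\,dy,
\end{equation*}
where $q_t^{\ssup r}$ is the Dirichlet heat kernel on $rI$. The method of images yields $q_t^{\ssup r}(x,y)=p_t(x,y)+O(e^{-cr^2/t})$ uniformly on compact $x,y$-sets, so in regime~3 the error is super-polynomially small in $t$. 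Replacing $q_t^{\ssup{r(t)}}$ by the free heat kernel $p_t$ turns the integral into the Karlin--McGregor expression for $\P_x(B_{[0,t]}\subset W)$, whose asymptotics is~\eqref{Wasy}; this identifies $K_\infty=K$. The continuity $K_r\to K_\infty$ as $r\to\infty$ then follows either by dominated convergence inside the series defining $K_r$ (using $q_t^{\ssup r}\to p_t$) or, more transparently, by a consistency argument in the overlap window $r(t)=r\sqrt t$ with $r\to\infty$.

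\textbf{Main obstacle.} The critical regime~2 is the most delicate point: one needs uniform control of the Taylor remainder across all $\nu$, absolute convergence of the series defining $K_r$ (exploiting the Gaussian exponential factor $e^{-r^{-2}\lambda_\nu}$ against the polynomial growth of $c_\nu\langle\Phi_\nu,\1\rangle$), and a reconciliation of the spectral and Karlin--McGregor representations at their interface in order to pin down $K_\infty=\lim_{r\to\infty}K_r=K$. By contrast, the spectral-gap argument in regime~1, the small-$r$ isolation, and the heat-kernel comparison in regime~3 are relatively routine once the framework is in place.
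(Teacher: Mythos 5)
Your spectral framework is the same as the paper's: Lemma~\ref{lem-EvExpWeyl} gives exactly your expansion of $\mathbb{P}_x(B_{[0,t]}\subset W\cap rI^k)$ as a sum over strictly increasing multi-indices, and the proof of Corollary~\ref{cor-princEF} and of Lemma~\ref{lemunibou} supply the expansion $\Phi_\nu(y)=c_\nu h(y)+O(|y|^{k(k-1)/2+2})$ with the constant $c_\nu$ explicit. Regime~1 and the small-$r$ matching $K_r\sim K_0\,r^{-k(k-1)/2}e^{-r^{-2}\lambda^{\ssup{W\cap I^k}}}$ follow the same spectral-gap route as the paper, which packages the uniform error control you would need in the bound \eqref{uniferbou}.

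Where your route genuinely diverges is in the regimes $r(t)\sim r\sqrt t$ and $\sqrt t\ll r(t)$. You stay inside the spectral sum (and, in the last regime, switch to a method-of-images heat-kernel comparison), whereas the paper applies Brownian scaling and conditioning:
\begin{equation*}
\mathbb{P}_x\big(\tau_{W\cap r(t)I^k}>t\big)=\mathbb{P}_{x/\sqrt t}\big(\tau_{rI^k}>1\mid\tau_{W}>1\big)\,\mathbb{P}_x(\tau_{W}>t),
\end{equation*}
feeding in Grabiner's asymptotic \eqref{Wasy} for the second factor and a Katori--Tanemura entrance-boundary result for the first. This identifies $K_r=\mathbb{P}_0(\tau_{rI^k}>1\mid\tau_{W}>1)\,K_\infty$ directly, so that $K_r\uparrow K_\infty$ as $r\to\infty$ follows from monotone convergence, with no need to control the spectral series near the degenerate boundary $tr^{-2}\to0$.

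The concrete gap in your version is at $K_r\to K_\infty$. ``Dominated convergence inside the series defining $K_r$'' cannot close it: in your formula $K_r=r^{-k(k-1)/2}\sum_\nu e^{-r^{-2}\lambda_\nu}c_\nu\langle\Phi_\nu,\1\rangle$ the prefactor vanishes as $r\to\infty$, while the formal limit $\sum_\nu c_\nu\langle\Phi_\nu,\1\rangle$ must diverge (otherwise $K_\infty=0$); this is a $0\cdot\infty$ indeterminacy, and what is actually required is a Laplace/Riemann-sum argument as $r^{-2}\downarrow0$ identifying the weighted sum with an integral against a Vandermonde density --- precisely the heavy step the conditioning identity bypasses. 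The ``consistency argument in the overlap window'' is the right moral, but as stated it has no content until one of the two representations is controlled uniformly down to $r^{-2}=0$. A secondary, smaller omission: after replacing $q_t^{\ssup{r(t)}}$ by $p_t$ in regime~3, your integral is still over $W\cap r(t)I^k$, so you also need to control the tail over $W\setminus r(t)I^k$ before invoking \eqref{Wasy}; this is routine but should be stated. You correctly single out regime~2 as delicate, but the $r\to\infty$ endpoint of your $K_r$ formula is where the proposal needs a genuinely new argument rather than a routine estimate, and the paper's conditioning identity is exactly that argument.
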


Interestingly, this shows that in the interpolating regime where $1 \ll r(t) \ll \sqrt{t}$, the polynomial decay term is already present; however, it does not come from the time parameter, but from the spatial parameter. It arises from the rescaling limit of the principal eigenfunction.

It is clear that the spectral decomposition method used in this paper is also able to describe the limiting conditional distribution of the endpoint of the Brownian motion given that the path stays in the truncated Weyl chamber for a long time; it is given in terms of the $L^1$-normalised principal eigenfunction:
$$
\mathbb{P}_x\big(B(t) \in \d y \,\big|\, B_{[0,t]}\subset W \cap I^k\big) \Longrightarrow \frac{f^{\ssup{W\cap I^k}}(y)}{\langle f^{\ssup{W\cap I^k}},\1\rangle}\,\d y,
$$
where the convergence is in the weak topology on  $W \cap I^k$. The second main question that we address is the description of these endpoints if the dimension $k$ grows to infinity, at times and in boxes that are allowed to grow unboundedly as a function of $k$, but do not have to. More precisely, we will give a large-deviation principle for the empirical measure of the endpoints of the $k$ single motions, properly rescaled, and identify the rate function explicitly with the help of some recent result by Eichelsbacher and Stolz. This in particular leads to a law of large numbers for this empirical measure in the spirit of the famous Wigner semi-circle law. However, the rate function and therefore the limiting probability measure have a different form, as the growing boundary of $r_k I$ is still felt in this limit.

More precisely, writing $B=B^{\ssup k}=(B_1,\dots,B_k)$, we consider the empirical measure of the properly transformed and rescaled end points of the $k$ Brownian motions, $B_1(t_k),\dots,B_k(t_k)$, 
\begin{equation}\label{mudef}
\mu^{\ssup k}_{r_k, t_k}= \frac 1k \sum_{i=1}^k \delta_{\sin(B_i(t_k)/r_k)},
\end{equation}
which is a random element of the set $\mathcal{M}_1([-1,1])$ of probability measures on $[-1,1]$.
A short version of our main result here, Theorem~\ref{thm-LDP}, reads as follows.

\begin{theorem}[Large-deviations principle]\label{thm-LDPmain}  
Suppose that the sequences $(r_k)_k$ and $(t_k)_k$ in $(0,\infty)$ fulfill $t_k\geq 16 r_k^2$. Then, as $k\to\infty$, uniformly in  $x\in W\cap r_k I^k$, the distribution of $\mu^{\ssup k}_{r_k, t_k}$ under $\P_x(\,\cdot\,|\, B_{[0,t_k]}^{\ssup k}\subset W\cap r_k I^k)$ satisfies a large-deviation principle on $\mathcal{M}_1([-1,1])$ with speed $k^2$ and rate function 
\begin{equation}\label{RF}
R(\mu)=\frac{1}{2}\int_{-1}^1\int_{-1}^1 \log|x-y|^{-1}\,\mu(\d x)\mu(\d y) - d,\qquad \mu\in\mathcal{M}_1([-1,1]),
\end{equation} 
where $d\in\R$ is such that $\inf_{\mu\in\mathcal{M}_1([-1,1])}R(\mu)=0$.
\end{theorem}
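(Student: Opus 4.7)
My plan is a three-step reduction. First, Brownian scaling reduces the problem to the endpoint of Brownian motion in the fixed chamber $W\cap I^k$, started at $x/r_k$, run for time $s_k:=t_k/r_k^2\geq 16$, and conditioned on non-exit. Second, the eigenvalue expansion together with the spectral gap replaces the conditional endpoint density by the principal-eigenfunction density $f^{\ssup{W\cap I^k}}(y)/\langle f^{\ssup{W\cap I^k}},\1\rangle$, uniformly in the starting point. Third, an explicit computation of this eigenfunction and a sine-substitution reveal that the symmetrised joint law of the $k$ transformed endpoints is a Vandermonde Coulomb gas on $[-1,1]$, from which the Eichelsbacher--Stolz LDP yields the stated result.

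\textbf{Explicit form of the principal eigenfunction.} The one-dimensional $L^2$-normalised Dirichlet eigenfunctions on $I=(-\pi/2,\pi/2)$ are $\phi_m(y)=\sqrt{2/\pi}\,\sin(m(y+\pi/2))$ with eigenvalues $m^2/2$. By the Karlin--McGregor formula, the eigenfunctions of $-\tfrac12\Delta$ on $W\cap I^k$ are $f_n(y)=\det[\phi_{n_j}(y_i)]_{i,j=1}^k$, indexed by strictly increasing sequences $1\leq n_1<\cdots<n_k$, with eigenvalues $\tfrac12\sum_j n_j^2$. The ground state $(n_j=j)$ carries $\lambda^{\ssup{W\cap I^k}}=\tfrac{1}{12}k(k+1)(2k+1)$, and invoking the Chebyshev identity $\sin(ju)=\sin(u)\,U_{j-1}(\cos u)$ with $u_i=y_i+\pi/2$ together with the Vandermonde evaluation of $\det[U_{j-1}(\cos u_i)]$, one obtains the factorisation
\begin{equation*}
f^{\ssup{W\cap I^k}}(y)\;=\;c_k\,\prod_{i=1}^k\cos(y_i)\,\prod_{1\leq i<j\leq k}\bigl(\sin(y_j)-\sin(y_i)\bigr)
\end{equation*}
for an explicit positive constant $c_k$.

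\textbf{Spectral reduction and Vandermonde gas.} The eigenvalue expansion reads
\begin{equation*}
p^{\ssup{W\cap I^k}}_{s_k}(x/r_k,y)\;=\;\e^{-s_k\lambda^{\ssup{W\cap I^k}}}f^{\ssup{W\cap I^k}}(x/r_k)f^{\ssup{W\cap I^k}}(y)\bigl(1+E_k(x/r_k,y)\bigr),
\end{equation*}
where the error $E_k$ is the sum over strict sequences $n\neq(1,\dots,k)$ of $\e^{-s_k(\lambda_n-\lambda_1)}f_n(x/r_k)f_n(y)/(f^{\ssup{W\cap I^k}}(x/r_k)f^{\ssup{W\cap I^k}}(y))$. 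The smallest positive gap is $\lambda_2-\lambda_1\geq k+\tfrac12$, and Hadamard's inequality bounds the determinantal eigenfunctions by $|f_n|\leq C^k$ uniformly in $n$; together with $s_k\geq 16$ this drives $E_k$ to zero uniformly in $x$ and $y$. Hence the conditional density of $(B_i(t_k)/r_k)_i$ agrees with $f^{\ssup{W\cap I^k}}(y)/\langle f^{\ssup{W\cap I^k}},\1\rangle$ up to a uniform $(1+o(1))$ factor. Now substitute $z_i=\sin(y_i)\in(-1,1)$; the Jacobian $\prod_i\cos(y_i)^{-1}$ exactly cancels the cosine product in $f^{\ssup{W\cap I^k}}$, so the joint density of $(Z_1,\dots,Z_k)$ on the ordered simplex is proportional to $\prod_{i<j}(z_j-z_i)$. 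After symmetrisation one recovers the $\beta=1$ Jacobi-type Coulomb gas on $(-1,1)^k$ with density proportional to $\prod_{i<j}|z_j-z_i|$. For this ensemble the empirical measure satisfies the LDP on $\mathcal M_1([-1,1])$ with speed $k^2$ and rate function $R$ of the stated form, directly by the Eichelsbacher--Stolz principle invoked in the introduction, the constant $d$ being the value of the quadratic log-energy functional at its unique minimiser.

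\textbf{Main obstacle.} The delicate point is making the spectral approximation uniform in the starting point $x\in W\cap r_k I^k$ and in the dimension $k$: the error $E_k$ must be shown to be $o(1)$ despite being a sum over infinitely many excited modes whose multiplicities at each gap level grow with $k$. The threshold $t_k\geq 16 r_k^2$ is calibrated precisely so that each term carries the prefactor $\e^{-16(\lambda_n-\lambda_1)}$; combined with the Hadamard bound on $|f_n|$ and a partition-counting argument grouping strict sequences by their defect $\sum_j(n_j-j)$, this yields an error even stretched-exponentially small in $k$, hence entirely negligible on the LDP scale $\e^{\Theta(k^2)}$.
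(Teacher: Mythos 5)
Your overall strategy matches the paper's: reduce to the unit chamber via Brownian scaling, apply the eigenvalue expansion, identify the ground state as a Vandermonde-times-cosines under the sine substitution, and invoke Eichelsbacher--Stolz. The explicit form of the principal eigenfunction and the reduction to a $\beta=1$ Jacobi Coulomb gas are both correct. However, the treatment of the spectral error term has a genuine gap that the paper's Lemma~\ref{lemunibou} is specifically designed to close, and which your argument does not.

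The quantity that must be controlled uniformly is the ratio $f_n(x)f_n(y)/\bigl(f^{\ssup{W\cap I^k}}(x)f^{\ssup{W\cap I^k}}(y)\bigr)$, not the numerator alone. You invoke Hadamard's inequality to bound $|f_n|$ by $C^k$ (in fact Hadamard only gives $(2k/\pi)^{k/2}$, but that is a secondary issue). This does nothing to control the ratio, because $f^{\ssup{W\cap I^k}}$ vanishes on $\partial(W\cap I^k)$, so $1/f^{\ssup{W\cap I^k}}$ is unbounded precisely on the set over which the theorem demands uniformity ($x\in W\cap r_kI^k$ up to the boundary, and $y$ ranging over the full chamber in the density statement). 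One must exploit the cancellation of the Vandermonde-type zero between $f_n$ and the ground state. The paper does this by recognising the ratio as a Schur polynomial $s_{\overrightarrow n-\mathrm{id}}(\sin x)$ up to explicit row-operation constants, and then using that Schur polynomials have nonnegative coefficients so that their modulus on $[-1,1]^k$ is maximised at $\1$, giving $|s_{\overrightarrow n-\mathrm{id}}(\sin x)|\le h_A(n)/h_A(\mathrm{id})$ independently of $x$. This bound, together with the estimate $\ln\bigl(h_A(\tilde l)/h_A(\mathrm{id})\bigr)\le\sum_{i:l_i>i}(l_i+i)(l_i-i)$ and the precise absorption of the combinatorial prefactors into the spectral gap $\tfrac12\sum_i(l_i^2-i^2)$ (not the linear defect $\sum_j(n_j-j)$ you propose, which is too weak), is what makes the threshold $t_k/r_k^2\ge 16$ work: it yields the explicit bound $|\eps|\le e^{k\gamma(t_k/r_k^2)}$ with $\gamma(16)<0$. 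Without the Schur-positivity step, the error estimate is not uniform up to the boundary and the claimed asymptotic equivalence of the conditional density to the Vandermonde density is not established.

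A second, smaller inaccuracy: your partition-counting argument would need to track the quadratic gap $\tfrac12\sum(n_j^2-j^2)\ge k+\tfrac12$ rather than the linear defect, and the multiplicities at each quadratic gap level are subtler than at each linear defect level; the paper's comparison to a geometric series in each coordinate handles this cleanly and you would need an analogous device.
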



Explicitly, the statement of Theorem~\ref{thm-LDPmain} is that $R$ is a lower semicontinuous function and that, for any open set $F\subset \mathcal{M}_1([-1,1])$ and for any closed subset $G\subset \mathcal{M}_1([-1,1])$,
\begin{eqnarray*}
\liminf_{k\to\infty}\frac 1{k^2}\log \P_x(\mu^{\ssup k}_{r_k, t_k}\in F\,|\, B_{[0,t_k]}^{\ssup k}\subset W\cap r_k I^k)&\geq& -\inf_{\mu\in F} R(\mu),\\
\limsup_{k\to\infty}\frac 1{k^2}\log \P_x(\mu^{\ssup k}_{r_k, t_k}\in G\,|\, B_{[0,t_k]}^{\ssup k}\subset W\cap r_k I^k)&\leq& -\inf_{\mu\in G} R(\mu).
\end{eqnarray*}

Actually, a related large-deviations principle with the same rate function $R$ has recently been derived by Eichelsbacher and Stolz \cite{Eichelsb} for the empirical measure of the eigenvalues of a certain random matrix with explicit joint distribution of the components in terms of an orthogonal polynomial ensemble. Via the spectral decomposition method, we show that the joint distribution of $\sin(B^{\ssup k}(t_k)/r_k)$ is asymptotically sufficiently close to that ensemble. We find it remarkable that no divergence of the time $t_k$ nor of the radius $r_k$ is required; apparently no convergence to the invariant distribution is necessary.

From the principle in Theorem~\ref{thm-LDPmain}, a law of large numbers in the spirit of Wigner's semicircle theorem is derived as follows (see Cor.~\ref{Cor-LLN}).  Let the situation of Theorem~\ref{thm-LDPmain} be given. 

\begin{corollary}[Law of large numbers] As $k\to\infty$, uniformly in  $x\in W\cap r_k I^k$, the distribution of $\mu^{\ssup k}_{r_k, t_k}$ under $\P_x(\,\cdot\,|\, B_{[0,t_k]}^{\ssup k}\subset W\cap r_k I^k)$ converges weakly towards the arcsine distribution on $[-1,1]$.
\end{corollary}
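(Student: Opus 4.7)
The plan is to derive the corollary directly from the large-deviation principle of Theorem~\ref{thm-LDPmain}, combined with the classical identification of the unique minimiser of the logarithmic energy on $[-1,1]$ as the arcsine law.

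First I would observe that $\mathcal{M}_1([-1,1])$ is compact in the weak topology (since $[-1,1]$ is compact), and the rate function $R$ is lower semicontinuous. Hence $R$ attains its infimum on the closed, compact set of minimisers $\mathcal{M}=\{\mu : R(\mu)=0\}$. For any weakly open neighbourhood $F$ of $\mathcal{M}$, the complement $F^{\mathrm c}$ is closed and disjoint from $\mathcal{M}$, so lower semicontinuity on the compact space gives $\delta:=\inf_{F^{\mathrm c}}R>0$. The upper bound in Theorem~\ref{thm-LDPmain} then yields
$$
\P_x\bigl(\mu^{\ssup k}_{r_k,t_k}\in F^{\mathrm c}\,\big|\,B^{\ssup k}_{[0,t_k]}\subset W\cap r_k I^k\bigr)\leq \e^{-k^2(\delta+o(1))}\longrightarrow 0,
$$
uniformly in $x\in W\cap r_k I^k$ by virtue of the uniformity built into the LDP. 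Thus $\mu^{\ssup k}_{r_k,t_k}$ concentrates on $\mathcal{M}$ in probability.

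Second I would identify $\mathcal{M}$ as a singleton. On probability measures supported in $[-1,1]$, the logarithmic energy $\mu\mapsto\int\!\!\int\log|x-y|^{-1}\,\mu(\d x)\mu(\d y)$ is strictly convex on the convex set where it is finite: for two probability measures $\mu,\nu$ with finite energy, the signed measure $\sigma=\mu-\nu$ satisfies $\sigma([-1,1])=0$, and the classical energy principle asserts $\int\!\!\int\log|x-y|^{-1}\sigma(\d x)\sigma(\d y)\geq 0$ with equality only if $\sigma=0$. Hence $R$ has at most one minimiser, and by classical logarithmic potential theory the equilibrium measure of the compact interval $[-1,1]$ is known to be the arcsine law $\mu_{\mathrm{arc}}(\d x)=\frac{1}{\pi\sqrt{1-x^2}}\,\d x$. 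Consequently $\mathcal{M}=\{\mu_{\mathrm{arc}}\}$.

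Combining the two steps, the conditional distribution of $\mu^{\ssup k}_{r_k,t_k}$ concentrates on the single point $\mu_{\mathrm{arc}}$ in the weak topology on $\mathcal{M}_1([-1,1])$. Convergence in probability to a deterministic limit implies convergence in distribution to the Dirac mass at that limit, which is precisely the weak-convergence statement of the corollary; uniformity in $x$ is inherited from the LDP. I do not anticipate any substantive obstacle: the only external input beyond the already-established LDP is the classical identification of the arcsine measure as the logarithmic equilibrium measure of $[-1,1]$.
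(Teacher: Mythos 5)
Your proof is correct and follows essentially the same route as the paper: the LDP upper bound forces exponential concentration around the zero set of the rate function, and that zero set is the singleton consisting of the arcsine law by the classical theory of logarithmic equilibrium measures. The paper compresses this by citing \cite[Cor.~3.2]{Eichelsb} and \cite{Saff} for the uniqueness of the minimiser and \cite{Ellis} for the Borel--Cantelli upgrade to a strong law, whereas you spell out the compactness/lower-semicontinuity argument and the strict convexity via the energy principle for signed measures of zero total mass; both are sound.
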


The remainder of the paper is devoted to the proper formulation of the main results and their proofs. Actually, we do not treat the Weyl chamber $W_A$ only, but all the three Weyl chambers $W_Z = W_A, W_C, W_D$ given by
\begin{eqnarray*}
W_A & = & \{x = (x_1, \ldots, x_k) \in \mathbb{R}^k \colon x_1 < \ldots < x_k \}, \\
W_C & = & \{x = (x_1, \ldots, x_k) \in \mathbb{R}^k \colon 0 < x_1 <
\ldots < x_k \}, \\
W_D & = & \{x = (x_1, \ldots, x_k) \in \mathbb{R}^k \colon |x_1| < x_2 <
\ldots < x_k \}.
\end{eqnarray*}
In connection with Brownian motion, these chambers appeared first in a work by Grabiner \cite{Gra}. They are closely connected to so called alcoves of Weyl groups, which were investigated by Krattenthaler \cite{Krat}, Grabiner \cite{Grabiner} and Doumerc and Moriarty \cite{Mor}.

One can also consider the Brownian motion conditioned never to hit the boundary of $W\cap I^k$. Specialised to our situation, Pinsky \cite{Pinsky} showed that this process has generator $\frac 12\Delta + \frac{\nabla f^{\ssup{W\cap I^k}}}{f^{\ssup{W\cap I^k}}}\nabla$. This process is stationary, and its invariant distribution has $(f^{\ssup{W\cap I^k}})^2$ as Lebesgue density.

The paper is organized as follows: in the next section we set up the eigenfunction expansions that are essential for our purposes. In the subsequent section we use this machinery to prove the asymptotics for the different regimes and the soft transitions between them. In the final section we prove the large deviation principle and the law of large numbers.

\section{Eigenfunction Expansions}\label{sec-EigenfExp}

\noindent In this section, we give the details of the eigenvalue expansions for the Brownian motion before exiting any of the truncated Weyl chambers $W_Z\cap I^k$ for $Z=A,C,D$. In particular, we explicitly identify all the eigenvalues and eigenfunctions of one half times the negative Dirichlet Laplacian, $-\frac 12 \Delta$, in these three sets.

It is well-known that the non-exiting problem from an open bounded connected domain $U\subset \R^k$ is closely linked with the eigenvalues and eigenfunctions of the Dirichlet Laplacian in $U$. Let $\tau_U = \inf \{t>0\colon B(t) \notin U \}$ be the first exit time of the Brownian motion from the domain $U$. Then the events $\{B_{[0,t]}\subset U\} $ and $\{\tau_U>t\}$ are identical. The transition density of $B$ before exiting $U$ can be viewed as a symmetric positive definite operator on $L^2(\mathbb{R}^k)$ (see, for example, \cite{Stone}) and therefore admits the eigenfunction expansion uniformly in $ x,y\in U$ for $t>0$,
\begin{equation}\label{EigenfExp}
\mathbb{P}_x(B(t)\in \d y ; \tau_U >t)/\d y  = \sum_{l\in\N} \e^{-t\lambda_l^{\ssup U}}f_l^{\ssup U}(x)f_l^{\ssup U}(y),
\end{equation}
where $(\lambda_l^{\ssup U})_{l\in\N}$ is the spectrum of $-\frac 12\Delta$ with Dirichlet (i.e., zero) boundary condition in $U$, arranged in non-decreasing order, and $(f_l^{\ssup U})_{l\in\N}$ is a complete orthonormal system in $L^2(U)$ of corresponding eigenfunctions. The principal eigenvalue $\lambda^{\ssup U}=\lambda_1^{\ssup U}$ is simple and positive, and the corresponding eigenfunction $f_1^{\ssup U}=f^{\ssup U}$ is chosen strictly positive in $U$ (see for example \cite{Davies}). 

The key idea is to combine the expansion in \eqref{EigenfExp} for one-dimensional motions in $I$ with a Karlin-McGregor type formula to derive an expansion for the $k$-dimensional motion in the truncated Weyl chamber. This very natural method was already suggested by Hobson and Werner \cite{Hobson} who examined non-colliding Brownian motions on the circle. It avoids solving the heat equation with zero boundary condition in the truncated Weyl chamber, which would seem technically nasty.

We need the one-dimensional eigenfunction expansion. It is well-known that the spectrum and normalized eigenfunctions of $-\frac 12\Delta$ on $I=(-\frac\pi2,\frac\pi 2)$ with Dirichlet boundary condition are given by
\begin{equation}\label{spectrumd=1}
\lambda_l^{\ssup I}=\frac{l^2}{2},\qquad f_l^{\ssup I}=\sqrt{\frac 2\pi}\times
\begin{cases}
 \sin(lx),&\mbox{if }l\mbox{ is even,}\\
\cos(lx),&\mbox{if }l\mbox{ is odd.}
\end{cases}
\end{equation}
We could consider an abitrary symmetric interval instead of $I$, but we focus on $(-\frac\pi2, \frac\pi2)$ for convenience since then the formulas simplify. The eigenvalues and eigenfunctions on the interval $rI$ with $r>0$ are related by
\begin{equation}\label{Eigenfscaling}
\lambda_{l}^{\ssup {rI}}=r^{-2}\lambda_{l}^{\ssup I}, \qquad  f_{l}^{\ssup{rI}}(x)=r^{-1/2}f_l^{\ssup I}(x/r).
\end{equation}

The Karlin-McGregor-type formula for truncated Weyl chambers can be obtained from the original formula (see \cite{Karlin2}) by a small modification. For completeness, we give the proof. We abbreviate the density of the distribution of the one-dimensional Brownian motion before exiting the interval $I$ by 
\begin{equation}\label{ptIdef}
p_t^{\ssup I}(x,y)=\mathbb{P}_{x}(B_1(t)\in \d y;\tau_I>t)/\d y,\qquad x,y\in I.
\end{equation}

\begin{lemma}[Karlin-McGregor formula for a truncated Weyl chamber]\label{KarMcG}
For any $t>0$, and for any $x,y$ in $W_A$, $W_C$ and $W_D$, respectively,
\begin{eqnarray}
\mathbb{P}_x(B(t)\in \d y, \tau_{W_A \cap I^k} > t)/\d y & = &
\det\big[(p_t^{\ssup I}(x_i,y_j) )_{i,j=1,\dots,k}\big],\label{KMGA} \\
\mathbb{P}_x(B(t)\in \d y, \tau_{W_C \cap I^k} > t)/\d y & = &
\det\big[(p_t^{\ssup I}(x_i,y_j) - p_t^{\ssup I}(x_i,-y_j))_{i,j=1,\dots,k}\big],\label{KMGC} \\
\mathbb{P}_x(B(t)\in \d y, \tau_{W_D \cap I^k} > t)/\d y & = & 
\frac12\det\big[(p_t^{\ssup I}(x_i,y_j) - p_t^{\ssup I}(x_i,-y_j))_{i,j=1,\dots,k}\big] \label{KMGD} \\
 &  &\qquad + \frac 12\det\big[(p_t^{\ssup I}(x_i,y_j) + p_t^{\ssup I}(x_i,-y_j))_{i,j=1,\dots,k}\big].\nonumber
\end{eqnarray}
\end{lemma}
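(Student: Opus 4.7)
The plan is to adapt the classical Karlin--McGregor swap argument to the present killed-kernel setting. That argument uses only the strong Markov property and the ability to reflect or swap trajectories at first hitting times, and both remain available here: the coordinates of $B$ are independent one-dimensional motions killed at exit of $I$, with product density $p_t^{\ssup{I^k}}(x,y)=\prod_i p_t^{\ssup I}(x_i,y_i)$, and $p_t^{\ssup I}$ is symmetric under $(x,y)\mapsto(-x,-y)$ because $I=(-\tfrac\pi2,\tfrac\pi2)$ is. Hence the classical combinatorial cancellation carries over with $p_t^{\ssup I}$ in place of the free heat kernel.

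For \eqref{KMGA} I would expand the right-hand determinant as $\sum_{\sigma\in\mathfrak{S}_k}\mathrm{sgn}(\sigma)\prod_i p_t^{\ssup I}(x_i,y_{\sigma(i)})$; each summand is, by independence, the density of paths in which coordinate $i$ starts at $x_i$, stays in $I$, and ends at $y_{\sigma(i)}$. Apply the involution that, at the first collision time between two coordinates (if any), swaps their post-collision trajectories; this pairs a path contributing to $\sigma$ with one contributing to $\sigma\circ(i,j)$ of opposite sign, so the contributions of all colliding paths cancel. The surviving non-colliding paths must respect the initial ordering $x_1<\dots<x_k$ and end at the ordered $y$, forcing $\sigma=\mathrm{id}$, and they are exactly the paths in the event $\{B(t)\in\d y,\ \tau_{W_A\cap I^k}>t\}$.

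For \eqref{KMGC} and \eqref{KMGD} I would prove the group-theoretic reflection identity
\[
\mathbb{P}_x\bigl(B(t)\in \d y,\ \tau_{W_Z\cap I^k}>t\bigr)\big/\d y \;=\; \sum_{g\in G_Z}\mathrm{sgn}(g)\,p_t^{\ssup{I^k}}(x,gy),\qquad y\in W_Z,
\]
with $G_C=\mathfrak{S}_k\ltimes\{\pm1\}^k$ (signed permutations, $\mathrm{sgn}(\sigma,\eps)=\mathrm{sgn}(\sigma)\prod_i\eps_i$) and $G_D\subset G_C$ the subgroup of even sign flips, by running the first-wall-hit involution over \emph{all} walls of $W_Z$ (collision walls together with $\{x_1=0\}$ for type $C$ or $\{x_1=-x_2\}$ for type $D$); the symmetry of $p_t^{\ssup I}$ guarantees that each reflected trajectory carries the same weight, while the reflection flips $\mathrm{sgn}(g)$. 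Writing $g=(\sigma,\eps)$ and using the product form of $p_t^{\ssup{I^k}}$, the inner sum over $\eps$ in type $C$ produces the differences $p_t^{\ssup I}(x_i,y_j)-p_t^{\ssup I}(x_i,-y_j)$ inside a determinant, yielding \eqref{KMGC}. For type $D$ the restriction $\prod_i\eps_i=+1$ is captured by the elementary identity $\sum_{\eps:\prod_i\eps_i=+1}\prod_i a_{i,\eps_i}=\tfrac12\prod_i(a_{i,+}+a_{i,-})+\tfrac12\prod_i(a_{i,+}-a_{i,-})$, which splits the sum into the two determinants of \eqref{KMGD}.

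The main obstacle will be the reflection identity above for type $D$: in type $C$ the extra wall $\{x_1=0\}$ is a single-coordinate wall, so one can first reduce each particle separately to the half-interval kernel $p_t^{\ssup I}(x,y)-p_t^{\ssup I}(x,-y)$ and then invoke the type $A$ argument verbatim, but the type $D$ wall $\{x_1=-x_2\}$ couples two coordinates and admits no particle-by-particle reduction. I would handle this uniformly by running a single path-swapping involution simultaneously across all walls of $W_Z$, grouping paths by the first wall actually hit and using the symmetry of $p_t^{\ssup I}$ to verify measure preservation; degenerate boundary events (hitting two walls at once, or hitting a corner of $I^k$ simultaneously with a wall) have Wiener measure zero and can be discarded.
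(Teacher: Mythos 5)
Your proposal is correct and takes essentially the same route as the paper's own proof: both reduce the claim to the signed reflection identity $\mathbb{P}_x(B(t)\in\d y,\tau_{W_Z\cap I^k}>t)/\d y=\sum_{z\in Z}\operatorname{sgn}(z)\,p_t^{\ssup{I^k}}(x,z(y))$, proved by reflecting the trajectory at the first hit of $\partial W_Z$ (what you call the first-wall-hit involution, and what the paper phrases as applying a sign-$(-1)$ Weyl group element after $\tau_{W_Z}$ via the strong Markov property), crucially using that $I^k$ is invariant under $Z$ and that $p_t^{\ssup I}$ is symmetric under $(x,y)\mapsto(-x,-y)$. The subsequent expansion of the group sum — factoring into permutations and sign flips, and for type $D$ splitting the even-sign-flip sum by the identity $\sum_{\prod\eps_i=1}\prod_i a_{i,\eps_i}=\tfrac12\prod_i(a_{i,+}+a_{i,-})+\tfrac12\prod_i(a_{i,+}-a_{i,-})$ — is exactly how the paper obtains \eqref{KMGA}--\eqref{KMGD}.
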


\begin{proof} We follow \cite[Sections 2 and 4]{Gra}, which gives the proof for $I^k$ replaced by $\R^k$. The same proof applies to our situation, since $I$ is symmetric around zero and is the same set in any of the $k$ dimensions.

The groups $Z = A, C, D$ corresponding to the Weyl chambers $W_Z$ consist of reflections $\R^k\to\R^k$, which are characterised by permutations of the components with sign changes of the components. The {\it symmetric group}, $A$, only permutes the components, $C$, the {\it hyperoctahedral group}, permutes the components with arbitrary sign changes and $D$, the {\it even hyperoctahedral group}, permutes the components with an even number of sign changes. If these reflections are understood as matrices, then $A$ is the set of all permutation matrices, $C$ is the set of all matrices that have precisely one real of modulus one in each row and each line, and zero otherwise, and $D$ is the set of all such matrices with an even number of $-1$. 

We prove the general formula
\begin{equation}\label{KMGformgeneral}
\mathbb{P}_x(B(t)\in \d y, \tau_{W_Z \cap I^k}>t) = 
\sum_{z\in Z} \sign(z) \mathbb{P}_x(B(t)\in \d z(y), \tau_{I^k}>t),
\end{equation}
where $z(y)=(\epsilon_1^{\ssup z} y_{\sigma_z(1)},\dots,\epsilon_k^{\ssup z} y_{\sigma_z(k)})\in\R^k$. Here $\epsilon_i^{\ssup z}\in\{-1,1\}$ denotes a possible sign change, $\sigma_z$ the permutation of the indices, and $\sign(z)=\sign(\sigma_z)\prod_i \epsilon_i^{\ssup z}$. Our assertions \eqref{KMGA}--\eqref{KMGD} can be deduced from \eqref{KMGformgeneral} by substituting the Weyl group $Z$. 

The idea is an application of the strong Markov property at time $\tau_{W_Z}$ and of an element of the Weyl group to the path $(B(\tau_{W_Z}+s))_{s\in[0,t-\tau_{W_Z}]}$. This uses that Brownian motion is a strong Markov process and that its increments are symmetric with respect to the Weyl groups, i.e., the distribution of $B(t_2)$ given $B(t_1)$ is, for $0\leq t_1<t_2$, the same as the distribution of $z(B(t_2))$ given $z(B(t_1))$. Hence, we can treat the difference of the two sides of \eqref{KMGformgeneral} as follows.
\begin{equation}\label{KMGproof1}
\begin{aligned}
& \mathbb{P}_x(B(t)\in \d y, \tau_{W_Z \cap I^k}>t) - 
\sum_{z\in Z} \sign(z) \mathbb{P}_x(B(t)\in \d y_z, \tau_{I^k}>t) = \\
& = \sum_{z\in Z} \sign(z) \left( \mathbb{P}_x(B(t)\in \d z(y), \tau_{W_Z \cap I^k}>t) - 
 \mathbb{P}_x(B(t)\in \d z(y), \tau_{I^k}>t) \right) \\
& =  \sum_{z\in Z} -\sign(z)  
 \mathbb{P}_x(B(t)\in \d z(y), \tau_{I^k}>t, \tau_{W_Z} \leq  t).
\end{aligned}
\end{equation}
Now we argue that the right hand side is equal to zero. Indeed, on $\{\tau_{W_Z} \leq  t\}$, we have $B(\tau_{W_Z})\in\partial W_Z$. We now construct some (random) $\sigma\in Z$ such that $\sign(\sigma)=-1$ and $\sigma(B(\tau_{W_Z}))=B(\tau_{W_Z})$. We distinguish some cases: If $B_i(\tau_{W_Z})=B_{i+1}(\tau_{W_Z})$ for some $i$, then pick $\sigma$ as the transposition of $i$ and $i+1$. If $Z=C$ and $B_1(\tau_{W_Z})=0$, then we pick $\sigma$ as the sign change for the first component. If $Z=D$ and $-B_1(\tau_{W_Z})=B_2(\tau_{W_Z})$, then pick $\sigma$ as the transposition of 1 and 2, together with two sign changes in the first two components. Note that the event $\{\tau_{I^k}>t\}$ remains unchanged when $(B(\tau_{W_Z}+s))_{s\in[0,t-\tau_{W_Z}]}$ is replaced by its image under $\sigma$, since $\sigma(I^k)=I^k$. Therefore, we have
$$
\begin{aligned}
\mbox{R.h.s.~of \eqref{KMGproof1}}& =  \sum_{z\in Z} -\sign(z)  
 \mathbb{P}_x(B(t)\in \d \sigma( z(y)), \tau_{I^k}>t, \tau_{W_Z} \leq t) \\
& = \sum_{z\in Z} \sign(\sigma \circ z)  
 \mathbb{P}_x(B(t)\in \d \sigma( z(y)), \tau_{I^k}>t, \tau_{W_Z} \leq t) \\
& = \sum_{\gamma\in Z} \sign(\gamma)  
 \mathbb{P}_x(B(t)\in \d \gamma(y), \tau_{I^k}>t, \tau_{W_Z} \leq t)\\
& =-\mbox{R.h.s.~of \eqref{KMGproof1}}.
\end{aligned}
$$
Hence, the term is equal to zero, and we are done.
\end{proof}

Now we use the eigenfunction expansion \eqref{EigenfExp} for $U=I$ in \eqref{KMGA}--\eqref{KMGD} to obtain the analogous expansions in the truncated Weyl chambers. We abbreviate, for a multi-index $l=(l_1,\dots,l_k)\in \N^k$ and $x=(x_1,\dots,x_k)\in I^k$,
\begin{equation}\label{eigenvZ}
\lambda_l^{\ssup Z}=\sum_{i=1}^k \lambda_{l_i}^{\ssup I}
\qquad\mbox{and}\qquad
f_l^{\ssup Z}(x)=\det\big[(f_{l_i}^{\ssup I}(x_j))_{i,j=1,\dots,k}\big]\times
\begin{cases}
 1,&\mbox{if }Z=A,\\
2^{k/2},&\mbox{if }Z=C,\\
2^{(k-1)/2},&\mbox{if }Z=D.
\end{cases}
\end{equation}
Furthermore, we need the three index sets
\begin{equation}\label{indexsets}
N_A=\N^k,\qquad N_C=(2\N)^k,\qquad    N_D= (2\N-1)^k \cup (2\N)^k.
\end{equation}

\begin{lemma}[Eigenvalue expansion in truncated Weyl chambers]\label{lem-EvExpWeyl}
The transition density of Brownian motion before exiting the truncated Weyl chamber $W_Z\cap I^k$ with $Z=A,C,D$ admits the following expansions, for any $t>0$, uniformly for $x,y\in W_Z\cap I^k$:
\begin{equation}\label{EvExpWeyl}
\mathbb{P}_x(B(t)\in \d y, \tau_{W_Z \cap I^k}>t)/\d y=\sum_{l\in W_A \cap N_Z} \e^{-t\lambda_l^{\ssup Z}} f_l^{\ssup Z}(x)f_l^{\ssup Z}(y).
\end{equation}
\end{lemma}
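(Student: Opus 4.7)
The plan is to start from the Karlin--McGregor identities of Lemma~\ref{KarMcG} and to substitute the one-dimensional spectral expansion \eqref{EigenfExp} (with $U=I$) into every matrix entry $p_t^{\ssup I}(x_i,y_j)$. For every fixed $t>0$, this series converges absolutely and uniformly for $(x_i,y_j)$ in compact subsets of $I\times I$ (since by \eqref{spectrumd=1} it is dominated by $\frac2\pi\sum_l\e^{-tl^2/2}$), which legitimates interchanging the finite Leibniz expansion of the determinant with the countable spectral sums.

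For the $W_A$-case, multilinearity and the Leibniz formula give
$$
\det\bigl[(p_t^{\ssup I}(x_i,y_j))_{i,j=1}^k\bigr]=\sum_{l\in\N^k}\e^{-t\lambda_l^{\ssup A}}\Bigl[\prod_{i=1}^k f_{l_i}^{\ssup I}(x_i)\Bigr]\det\bigl[(f_{l_i}^{\ssup I}(y_j))_{i,j=1}^k\bigr].
$$
Whenever two entries of $l$ coincide, the remaining determinant vanishes, so one can reparametrise the surviving multi-indices by pairs $(\ell,\tau)\in(W_A\cap\N^k)\times\Sym_k$ via $l_i=\ell_{\tau(i)}$. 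The $\tau$-summation antisymmetrises $\prod_i f_{l_i}^{\ssup I}(x_i)$ into a second determinant and produces
$$
\det\bigl[(f_{\ell_i}^{\ssup I}(x_j))_{i,j=1}^k\bigr]\,\det\bigl[(f_{\ell_i}^{\ssup I}(y_j))_{i,j=1}^k\bigr]=f_\ell^{\ssup A}(x)\,f_\ell^{\ssup A}(y)
$$
in view of \eqref{eigenvZ}, which gives \eqref{EvExpWeyl} for $Z=A$.

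For $Z=C,D$ I would exploit the parity statement in \eqref{spectrumd=1}: the eigenfunction $f_l^{\ssup I}$ is odd in its argument precisely when $l$ is even. Consequently,
$$
p_t^{\ssup I}(x,y)\mp p_t^{\ssup I}(x,-y)=2\sum_{l\in\Lambda_{\mp}}\e^{-t\lambda_l^{\ssup I}}f_l^{\ssup I}(x)f_l^{\ssup I}(y),\qquad \Lambda_-=2\N,\quad \Lambda_+=2\N-1.
$$
Inserting these into \eqref{KMGC} and into the two determinants of \eqref{KMGD} and repeating the $A$-argument restricts the strictly ordered index $\ell$ to $W_A\cap(2\N)^k=W_A\cap N_C$ in the $C$-case and to $W_A\cap\bigl[(2\N)^k\cup(2\N-1)^k\bigr]=W_A\cap N_D$ in the $D$-case. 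The scalar $2^k$ created in \eqref{KMGC} and the combination $\frac12\cdot 2^k=2^{k-1}$ arising in \eqref{KMGD} coincide with the squares of the normalisation constants $2^{k/2}$ and $2^{(k-1)/2}$ that were built into the definition of $f_\ell^{\ssup Z}$ in \eqref{eigenvZ}, so the expansion takes the announced form.

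The only non-bookkeeping point is the justification of interchanging the infinite spectral sums with the determinant expansion; I expect this to be the main obstacle, but it follows at once from the absolute uniform convergence of \eqref{EigenfExp} on compact subsets of $I\times I$, a standard consequence of the Gaussian-type decay $\e^{-tl^2/2}$ of the coefficients for $t>0$ together with the uniform boundedness of $f_l^{\ssup I}$ by $\sqrt{2/\pi}$.
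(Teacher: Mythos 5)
Your proposal is correct and follows essentially the same route as the paper: substitute the one-dimensional eigenfunction expansion into the Karlin--McGregor determinant, kill coincident indices, reparametrise by an ordered multi-index together with a permutation, and restrict to even/odd indices via parity for $C$ and $D$. The only (cosmetic) difference is that you expand via the Leibniz formula to get a ``product $\times$ determinant'' form directly, whereas the paper first uses column multilinearity to get a determinant of products and then works a bit harder to factorise it into $f_\ell^{\ssup A}(x)f_\ell^{\ssup A}(y)$; both bookkeeping variants are equivalent and your justification of the sum--determinant interchange via absolute uniform convergence is exactly what is needed.
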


\begin{proof} Let us first prove the case A; we later explain the differences that occur in the two other cases, C and D.

We substitute the eigenvalue expansion \eqref{EigenfExp} for $p_t^{\ssup I}$ defined in \eqref{ptIdef} in \eqref{KMGA} to obtain 
\begin{equation}\label{EigenvExpProof1}
\begin{aligned}
\mathbb{P}_x(B(t)\in \d y, &\tau_{W_A \cap I^k} > t)/\d y
= \det\Big[\Big(\sum_{l=1}^{\infty} \e^{-t\lambda_{l}^{\ssup I}} f_{l}^{\ssup I}(x_i)f_{l}^{\ssup I}(y_j)\Big)_{i,j=1,\dots,k}\Big]\\
&= \sum_{l=(l_1,\dots,l_k)\in \N^k}\prod_{j=1}^k\e^{-t\lambda_{l_j}^{\ssup I}} \det\Big[\big(  f_{l_j}^{\ssup I}(x_i)f_{l_j}^{\ssup I}(y_j)\big)_{i,j=1,\dots,k}\Big],
\end{aligned}
\end{equation}
where we also used the multilinearity of the determinant in columns. Observe that the last determinant is identically zero if the $k$ indices $l_1,\dots,l_k$ are not pairwise distinct. Indeed, if $l_i=l_j$ for some $i\not=j$, then at least the $i$th and the $j$th row of the matrix are multiples of each other for all $x,y\in W_A\cap I^k$. Hence, the sum on $l\in\N^k$ may be reduced to the sum on $l\in W_A\cap\N^k$ with an additional sum on $\beta\in\Sym_k$, the set of all permutations of $1,\dots,k$, and $l$ is replaced by $l_\beta=(l_{\beta(1)},\dots,l_{\beta(k)})$. Using also the notation in \eqref{eigenvZ} for the eigenvalue, this gives
\begin{equation}\label{EigenvExpProof2}
\begin{aligned}
\mbox{R.h.s.~of \eqref{EigenvExpProof1}}&=\sum_{l=(l_1,\dots,l_k)\in W_A\cap \N^k}\e^{-t\lambda_{l}^{\ssup A}}\sum_{\beta\in \Sym_k}\det\Big[\Big(  f_{l_{\beta(j)}}^{\ssup I}(x_i)f_{l_{\beta(j)}}^{\ssup I}(y_j)\Big)_{i,j=1,\dots,k}\Big].
\end{aligned}
\end{equation}
Let us evaluate the sum on $\beta$. Using the substitutions $j = \tau^{-1}\circ\beta^{-1}(i)$ and $\tau^{-1}\circ\beta = \sigma$ for $\beta,\tau \in \Sym_k$, we compute 
$$
\begin{aligned}
\sum_{\beta\in \Sym_k}&\det\Big[\Big(  f_{l_{\beta(j)}}^{\ssup I}(x_i)f_{l_{\beta(j)}}^{\ssup I}(y_j)\Big)_{i,j=1,\dots,k}\Big]\\
&=\sum_{\beta, \tau} \sign(\tau) \prod_{j=1}^k \big[f_{l_{\beta\circ\tau(j)} }^{\ssup I}(x_j) f_{ l_{\beta\circ\tau(j)}}^{\ssup I}(y_{\tau(j)} )\big]\\
& = \sum_{\beta,\tau} \sign(\tau) \prod_{i=1}^k \big[f_{l_i}^{\ssup I}( x_{\tau^{-1}\circ\beta^{-1}(i)} )f_{l_i}^{\ssup I}( y_{\beta^{-1}(i)} )\big]\\
& = \sum_{\beta,\tau} \sign(\tau) \prod_{i=1}^k \big[f_{l_i}^{\ssup I}( x_{\tau^{-1}\circ\beta(i)} )f_{l_i}^{\ssup I}( y_{\beta(i)} )\big]\\
&= \sum_{\beta,\sigma} \sign(\beta)\sign(\sigma) \prod_{i=1}^k \big[f_{l_i}^{\ssup I}( x_{\sigma(i)} )f_{l_i}^{\ssup I}( y_{\beta(i)} )\big] \\
& = \Big(\sum_{\beta}  \sign(\beta) \prod_{i=1}^k f_{l_i}^{\ssup I}( y_{\beta(i)} )\Big)\Big( \sum_{\sigma}  \sign(\sigma) \prod_{j=1}^k f_{l_j}^{\ssup I}( x_{\sigma(j)} )\Big) \\
& = f_l^{\ssup A}(x)f_l^{\ssup A}(y) ,
\end{aligned}
$$
where we used the notation  in \eqref{eigenvZ} for the eigenfunction in the last step. Using this in \eqref{EigenvExpProof2}, we see that the proof of the lemma for $Z=A$ is complete.

Now we explain the differences to cases C and D. In the case C, inserting the eigenvalue expansion \eqref{EigenfExp} for $U=I$ in the formula \eqref{KMGC}, recalling \eqref{spectrumd=1} and using that the cosine is an even function and sine an odd one, we see that all cosine terms disappear and all sine terms appear twice, more precisely,
$$ 
\mathbb{P}_x(B(t)\in \d y, \tau_{W_C \cap I^k} > t)/\d y
= \det\Big[\Big(\sum_{l=1}^{\infty} 2 \e^{-t\lambda_{2l}^{\ssup I}} f_{2l}^{\ssup I}(x_i)f_{2l}^{\ssup I}(y_j)\Big)_{i,j=1,\dots,k}\Big].
$$
Hence, only even indices appear, and a factor of $2^k$ can be extracted from the determinant and is distributed to the two functions $f_{2l}^{\ssup C}(x)$ and $f_{2l}^{\ssup C}(y)$, see the second line in \eqref{eigenvZ}.

Case D is similar; from (\ref{KMGD}) we see that the first determinant is the same as in case C, and in the second only cosines remain:
\begin{eqnarray*}
& \mathbb{P}_x(B(t)\in \d y, \tau_{W_D \cap I^k} > t)/\d y  = \frac{1}{2} \det\Big[\Big(\sum_{l=1}^{\infty} 2 \e^{-t\lambda_{2l}^{\ssup I}} f_{2l}^{\ssup I}(x_i)f_{2l}^{\ssup I}(y_j)\Big)_{i,j=1,\dots,k}\Big] \\ 
& + \frac{1}{2} \det\Big[\Big(\sum_{l=1}^{\infty} 2 \e^{-t\lambda_{2l-1}^{\ssup I}} f_{2l-1}^{\ssup I}(x_i)f_{2l-1}^{\ssup I}(y_j)\Big)_{i,j=1,\dots,k}\Big]  .
\end{eqnarray*}
Now one easily sees how the prefactors $2^{k/2}$, $2^{(k-1)/2}$ and the index sets $N_C$, $N_D$ arise. 
\end{proof}

\begin{corollary}
For $Z=A,C,D$, the negative Dirichlet Laplacian $-\frac 12\Delta$ on $W_Z\cap I^k$ has spectrum $\{\lambda_l^{\ssup Z}\colon l\in W_A\cap N_Z\}$, where these eigenvalues are counted with multiplicity. Furthermore, $\{f_l^{\ssup Z}\colon l\in W_A\cap N_Z\}$ is a complete orthonormal system of corresponding eigenfunctions.
\end{corollary}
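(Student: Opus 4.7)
The strategy is to verify that $\{f_l^{\ssup Z}\colon l\in W_A\cap N_Z\}$ is an orthonormal family of Dirichlet eigenfunctions of $-\frac 12 \Delta$ on $W_Z \cap I^k$ with eigenvalues $\lambda_l^{\ssup Z}$, and then to derive both completeness and the spectrum identification by feeding the explicit expansion of Lemma~\ref{lem-EvExpWeyl} back into the abstract spectral identity \eqref{EigenfExp}.

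For the eigenfunction property, each $f_l^{\ssup Z}$ is a signed sum $c_Z \sum_{\sigma\in\Sym_k}\sign(\sigma)\prod_{i=1}^k f_{l_{\sigma(i)}}^{\ssup I}(x_i)$ of products of one-dimensional Dirichlet eigenfunctions on $I$, so the eigenvalue equation $-\frac 12\Delta f_l^{\ssup Z} = \lambda_l^{\ssup Z}f_l^{\ssup Z}$ is immediate by linearity with $\lambda_l^{\ssup Z}=\sum_i\lambda_{l_i}^{\ssup I}$. Vanishing on $\partial(W_Z\cap I^k)\cap\partial I^k$ follows because every $f_{l_i}^{\ssup I}$ is zero at $\pm\pi/2$. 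Vanishing on the \emph{internal} chamber walls (hyperplanes $x_i=x_{i+1}$, plus $x_1=0$ for $Z=C$ and $x_1=\pm x_2$ for $Z=D$) is exactly the antisymmetry of the determinant under the reflections generating $Z$; the restriction $l\in N_Z$ is what supplies the correct parity of the one-dimensional factors (only sines for $Z=C$; uniform parity for $Z=D$) required for this antisymmetry.

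For orthonormality, I extend the integration from the fundamental domain $W_Z\cap I^k$ to the full cube $I^k$ via the $Z$-action, obtaining
\begin{equation*}
\int_{W_Z\cap I^k} f_l^{\ssup Z}f_m^{\ssup Z}\,\d x = \frac{c_Z^2}{|Z|}\int_{I^k}\det\bigl[(f_{l_i}^{\ssup I}(x_j))\bigr]\det\bigl[(f_{m_i}^{\ssup I}(x_j))\bigr]\,\d x,
\end{equation*}
which, upon expanding the two determinants and invoking $\langle f_l^{\ssup I},f_m^{\ssup I}\rangle_{L^2(I)}=\delta_{l,m}$, reduces to $\delta_{l,m}$. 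The normalising constants $c_Z\in\{1,2^{k/2},2^{(k-1)/2}\}$ in \eqref{eigenvZ} are precisely those that make the ratio $c_Z^2/|Z|$ give the correct unit coefficient, given $|A|=k!$, $|C|=k!2^k$, $|D|=k!2^{k-1}$.

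Finally, for completeness, suppose $g\in L^2(W_Z\cap I^k)$ is orthogonal to every $f_l^{\ssup Z}$. Integrating the expansion of Lemma~\ref{lem-EvExpWeyl} against $g$ yields
\begin{equation*}
\E_x\bigl[g(B(t));\,\tau_{W_Z\cap I^k}>t\bigr]=\sum_{l\in W_A\cap N_Z}\e^{-t\lambda_l^{\ssup Z}}f_l^{\ssup Z}(x)\langle f_l^{\ssup Z},g\rangle = 0
\end{equation*}
for every $t>0$ and $x\in W_Z\cap I^k$; strong continuity of the killed heat semigroup on $L^2$ as $t\downarrow 0$ then forces $g=0$. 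Comparing the now-complete orthonormal expansion with the abstract spectral expansion \eqref{EigenfExp} for $U=W_Z\cap I^k$ identifies $\{\lambda_l^{\ssup Z}\colon l\in W_A\cap N_Z\}$ as the full spectrum counted with multiplicity. The main obstacle is the bookkeeping in cases $C$ and $D$: confirming that the parity constraints encoded in $N_C,N_D$ together with the constants $c_Z$ correctly realise the antisymmetry under the full Weyl group without double-counting permutations (which is why we sum over $W_A\cap N_Z$, not $W_Z\cap N_Z$). Once this is checked, the rest of the argument is a routine application of uniqueness of spectral decompositions.
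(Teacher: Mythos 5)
Your proof is correct and follows essentially the same route as the paper: verifying the eigenvalue equation by linearity, deducing the Dirichlet boundary condition from the determinantal (antisymmetric) structure together with the parity constraints built into $N_Z$, unfolding the integral over the chamber to the full cube for orthonormality, and then deriving completeness by showing that orthogonality to every $f_l^{\ssup Z}$ would force the killed semigroup applied to $g$ to vanish. The only cosmetic difference is the last step of the completeness argument, where you invoke strong $L^2$-continuity of the killed heat semigroup as $t\downarrow 0$, whereas the paper observes that $\langle g, P_t g\rangle=0$ contradicts the strict positive-definiteness implicit in the abstract expansion \eqref{EigenfExp}; both are standard and equivalent.
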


\begin{proof}
The functions $f_l^{\ssup Z}$ with $l\in W_A\cap N_Z$ are orthonormal on $L^2(W_Z\cap I^k)$ and they are eigenfunctions of $-\frac 12\Delta$ corresponding to the eigenvalues $\lambda_l^{\ssup Z}$, since the $f_l^{\ssup Z}$ are linear combinations of products of one-dimensional eigenfunctions which are orthonormalised on $I$, and the Laplacian is a linear operator. For the reader's convenience, we detail this. We concentrate on case A since the other cases follow in the same spirit.
First the eigenfunction property:
$$
\begin{aligned}
- \frac 12 \Delta f_l^{\ssup A}(x) & =  - \frac 12 \Delta \det\big[(f_{l_i}^{\ssup I}(x_j))_{i,j=1,\dots,k}\big] =  - \frac 12  \sum_{\sigma} \sign(\sigma) \Delta \prod_{i=1}^k f_{l_i}^{\ssup I}(x_{\sigma(i)}) \\
& = \sum_{\sigma} \sign(\sigma) \left(\sum_{i=1}^k \lambda_{l_i}^{\ssup I}\right)  \prod_{i=1}^k f_{l_i}^{\ssup I}(x_{\sigma(i)})
= \left(\sum_{i=1}^k \lambda_{l_i}^{\ssup I}\right) f_l^{\ssup A}(x)   \\
& =  \lambda_l^{\ssup A} f_l^{\ssup A}(x),
\end{aligned}
$$
where we also used \eqref{spectrumd=1} and \eqref{eigenvZ}.
The boundary condition is obviously satisfied because of the boundary condition of the onedimensional eigenfunctions and the determinantal structure. Now orthonormality for two multi-indices $l^1,l^2$:
$$
\begin{aligned}
\int_{W_A \cap I^k} f_{l^1}^{\ssup A}(x)f_{l^2}^{\ssup A}(x) dx 
& =  \frac{1}{k!} \int_{I^k} f_{l^1}^{\ssup A}(x)f_{l^2}^{\ssup A}(x)\, \d x   \\
&=\frac{1}{k!} \sum_{\alpha, \beta} \sign(\alpha\circ\beta) \int_{I^k} \prod_{i=1}^k f_{l_i^1}^{\ssup I}(x_{\alpha(i)}) f_{l_i^2}^{\ssup I}(x_{\beta(i)}) \,\d x\\
&=\frac{1}{k!} \sum_{\alpha, \beta} \sign(\alpha\circ\beta) \prod_{i=1}^k\Big\langle f_{l^1_i}^{\ssup I},f_{l^2_{\alpha\circ\beta^{-1}(i)}}^{\ssup I}\Big\rangle,
\end{aligned}
$$
where we wrote $\langle\cdot,\cdot\rangle$ for the standard inner product on $\R$. If $l^1\neq l^2$, then, for any $\alpha,\beta$, there is at least one $i$ such that $l^1_i\neq l^2_{\alpha\circ\beta^{-1}(i)}$, and hence the corresponding inner product is zero, since the $f_l^{\ssup I}$ form an orthonormal basis. If $l^1=l^2$, then for any $\alpha\neq \beta$, there is also at least such an $i$, such that the sum reduces to the sum on $\alpha=\beta$, which gives that the right-hand side is equal to one. This shows orthonormality.

These are in fact {\em all} eigenfunctions since otherwise there is a function $g\neq 0$ such that
$$
0=\sum_{l\in W_A \cap N_Z} \e^{-t\lambda_l^{\ssup Z}}  {\langle f_l^{\ssup Z}, g\rangle}^2 =\int \int g(y)g(x) \mathbb{P}_x(B(t)\in \d y, \tau_{W_Z \cap I^k}>t) \,\d x .
$$
But this contradicts the existence of an expansion of the transition density in terms of  a complete orthonormal system, recall \cite{Stone}.
\end{proof}

Note that, for $k\geq 3$, some of the eigenvalues $\lambda_l^{\ssup Z}$ coincide for different $l$, i.e., their multiplicity is larger than one. Examples of such eigenvalues can be constructed using Pythagorean number triples.

\begin{remark}
In particular the principal eigenvalues and eigenfunctions of $-\frac12\Delta$ in $W_Z\cap I^k$ with Dirichlet boundary condition are given by 
\begin{equation}\label{eigenvWeyl}
\lambda^{\ssup A}=\lambda_{\rm id}^{\ssup A} = \frac{1}{2}\sum_{i=1}^k i^2, \qquad \lambda^{\ssup C} = \lambda_{2\rm id}^{\ssup C}= 4\lambda^{\ssup A}, \qquad  
\lambda^{\ssup D} = \lambda_{2{\rm id}-1}^{\ssup D} = \frac{1}{2}\sum_{i=1}^k (2i-1)^2,
\end{equation}
and
\begin{equation}\label{eigenfident}
f^{\ssup A}= |f^{\ssup A}_{\rm id}|, \qquad f^{\ssup C}=2^{\frac{k}{2}} |f^{\ssup A}_{2{\rm id}}|, \qquad f^{\ssup D} =2^{\frac{k-1}{2}} |f^{\ssup A}_{2{\rm id}-1}|,
\end{equation}
where ${\rm id}=(1,2,3,\dots,k)$.
\end{remark}
Hence, $f^{\ssup Z}=f^{\ssup{W_Z \cap I^k}}$ in the notation of Section 1.
We are able to give explicit expressions for the principal eigenfunctions in terms of the {\it r\'eduites}. These are, by definition, positive harmonic functions for $-\frac 12\Delta$ that vanish on the boundary of the Weyl chambers. They are unique, up to positive multiples. They are given by
\begin{equation}\label{reduite}
h_A(x)=\det\big[(x_i^{j-1})_{i,j=1,\dots,k}\big], \qquad h_D(x)= h_A(x^2), \qquad h_C(x)=h_D(x)\prod_{i=1}^k x_i,
\end{equation}
where we wrote $x^2$ for the vector $(x_1^2,\dots,x_k^2)$. Note that $h=h_A$ is the classical Vandermonde determinant. The following identification clarifies the relation between the functions appearing in the asymptotics \eqref{Wasy} and \eqref{Wboxasy}. It also shows that it will be natural to consider the sine of the endpoints of the motions instead of the motions themselves, see \eqref{mudef}.

\begin{corollary}[Principal eigenfunctions]\label{cor-princEF}
\begin{eqnarray}
f^{\ssup A}(x) & = & \frac{2^{k^2/2}}{\pi^{k/2}} h_A(\sin(x)) \prod_{i=1}^k \cos(x_i),\label{fAident}  \\
f^{\ssup C}(x) & = & \frac{2^{k(k+1)}}{\pi^{k/2}} h_C(\sin(x)) \prod_{i=1}^k \cos(x_i), \label{fCident} \\
f^{\ssup D}(x) & = &  \frac{2^{(2k^2 - 1)/2}}{\pi^{k/2}} h_D(\sin(x)) \prod_{i=1}^k \cos(x_i)  .\label{fDident}
\end{eqnarray}
\end{corollary}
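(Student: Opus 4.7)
My plan is to reduce each identity to a statement about a determinant of trigonometric functions, then factor out the appropriate $\cos(x_j)$ (and, in case~C, $\sin(x_j)$) from each column so that what remains is a Vandermonde-type determinant in $\sin(x_j)$ or $\sin^2(x_j)$. The starting point is the Remark just above: up to sign and explicit powers of $2$, the three principal eigenfunctions are $f^{\ssup A}_{\rm id}$, $f^{\ssup A}_{2{\rm id}}$, $f^{\ssup A}_{2{\rm id}-1}$, and by \eqref{eigenvZ} and \eqref{spectrumd=1} each of these is $(2/\pi)^{k/2}$ times the determinant of a $k\times k$ matrix whose rows are either $(\cos(i x_j))_j$ (for odd $i$) or $(\sin(i x_j))_j$ (for even $i$).

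The key trigonometric input I would establish first is the factorization
\begin{equation*}
\cos((2m-1)x)=\cos(x)\, P_{2m-1}(\sin x),\qquad \sin(2mx)=\cos(x)\, P_{2m}(\sin x),
\end{equation*}
where $P_i$ is a polynomial of degree $i-1$ whose leading coefficient is $(-1)^{\lfloor(i-1)/2\rfloor}2^{i-1}$. This is immediate from expanding $(\cos x\pm i\sin x)^i$ via the binomial theorem, using $\cos^{2r}x=(1-\sin^2x)^r$ to eliminate the remaining even powers of $\cos x$, and summing the resulting binomial identity for the leading term. Moreover, when $i$ is even, $P_i$ is odd as a polynomial (giving $P_{2m}(s)=s\,\widetilde P_m(s^2)$ with $\deg\widetilde P_m=m-1$), and when $i=2m-1$ is odd, $P_{2m-1}$ is even (a polynomial in $s^2$ of degree $m-1$). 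These parity statements are what will make the cases $C$ and $D$ close up, since the relevant indices are then uniformly even or uniformly odd.

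Now I factor $\cos(x_j)$ out of the $j$-th column of each matrix, picking up the factor $\prod_j\cos(x_j)$. The remaining matrix has $(i,j)$-entry $P_{l_i}(\sin x_j)$ with $\deg P_{l_i}=l_i-1$. A standard argument (adding multiples of earlier rows to cancel lower-degree monomials, which doesn't change the determinant) turns this into $\prod_i a_{l_i}$ times $\det[(\sin x_j)^{l_i-1}]$. For case~$A$ we have $l_i=i$, so the latter determinant is exactly the Vandermonde $h_A(\sin x)$; the product of absolute values of leading coefficients is $\prod_{i=1}^k 2^{i-1}=2^{k(k-1)/2}$, and combined with the prefactor $(2/\pi)^{k/2}$ this gives the claimed $2^{k^2/2}/\pi^{k/2}$ for \eqref{fAident}. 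For case~$D$, with $l_i=2i-1$ and $P_{2i-1}$ even in $\sin x$, the column-reduced determinant is $\det[(\sin^2 x_j)^{i-1}]=h_A(\sin^2x)=h_D(\sin x)$; the leading coefficients contribute $\prod 2^{2i-2}=2^{k(k-1)}$, which multiplied by $(2/\pi)^{k/2}$ and the outer factor $2^{(k-1)/2}$ yields $2^{(2k^2-1)/2}/\pi^{k/2}$. For case~$C$, with $l_i=2i$ and $P_{2i}$ odd, I additionally factor $\sin(x_j)$ from each column, turning the entries into polynomials of degree $i-1$ in $\sin^2 x_j$; the remaining determinant is again $h_D(\sin x)$, and $h_D(\sin x)\prod_j\sin(x_j)=h_C(\sin x)$ by \eqref{reduite}, with the constants $(2/\pi)^{k/2}\cdot 2^{k^2}\cdot 2^{k/2}=2^{k(k+1)}/\pi^{k/2}$.

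The only genuine obstacle is bookkeeping: correctly tracking the powers of $2$, the $\pi^{-k/2}$ factor, and ensuring positivity. The latter is handled by noting that on $W_Z\cap I^k$ every $\cos(x_i)$ is positive and $s\mapsto \sin s$ is strictly increasing, so $h_A(\sin x)$, $h_D(\sin x)=h_A(\sin^2 x)$ (for $W_D$, using $|x_1|<x_2$) and $h_C(\sin x)$ are all positive, which matches the positivity of $f^{\ssup Z}$ and absorbs the signs coming from the $(-1)^{\lfloor(i-1)/2\rfloor}$ factors in the leading coefficients.
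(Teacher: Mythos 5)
Your proposal is correct and follows essentially the same route as the paper: expand the entries of the determinant as $\cos(x_j)$ times a polynomial of degree $l_i-1$ in $\sin(x_j)$ (resp.\ $\sin^2(x_j)$), reduce by row operations to a Vandermonde determinant in $\sin(x_j)$, and track the leading coefficients $2^{l_i-1}$ and the prefactors of $2$, absorbing signs via positivity on $W_Z\cap I^k$ (the paper instead works with the absolute value from \eqref{eigenfident}). The bookkeeping of the powers of $2$ checks out in all three cases.
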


\begin{proof} Let us first consider the case A. Use \eqref{eigenfident} and \eqref{eigenvZ} (recall \eqref{spectrumd=1}) to see that
\begin{equation}
f^{\ssup A}(x)=\Big(\frac 2\pi\Big)^{k/2}\Big|\det\Big[ \big(\cos(i x_j)\1_{\{i\mbox{ odd}\}}+\sin(i x_j)\1_{\{i\mbox{ even}\}}\big)_{i,j=1,\dots,k}\Big]\Big|.
\end{equation}
Now use the well-known sine and cosine expansions for $i$ odd in the cosine and for $i$ even in the sine:
\begin{eqnarray}
\cos(ix) & = & \cos(x) \sum_{n=0}^{(i-1)/2} (-1)^n {i \binom 2n} (\sin^2(x))^n (1 - \sin^2(x))^{(i-1)/2-n},\label{cosexp}\\
\sin(ix)  &=&  \cos(x)\sin(x) \sum_{n=1}^{i/2} (-1)^{n+1} {i \binom 2n-1} (\sin^2(x))^{n-1} (1 - \sin^2(x))^{i/2-n}.\label{sinexp}
\end{eqnarray}
Note that the degrees of the monomials in the expansions all have the same parity. 
We extract the factors $\cos(x_j)$ row-wise from the determinants so that the terms remaining in the $i$-th row are polynomials $p_i$ in $\sin(x_j)$, i.e.,
$$
f^{\ssup A}(x)=\Big(\frac 2\pi\Big)^{k/2}\prod_{i=1}^k \cos(x_i) \Big|\det\Big[\big(p_i(\sin(x_j))\big)_{i,j=1,\dots,k}\Big]\Big|.
$$
Now observe that $p_i$ has degree precisely equal to $i-1$ with highest coefficient coming from a summation of the binomial coefficients over all  summands: For $i$ odd,
\begin{equation}\label{piodd}
p_i(y)=\sum_{n=0}^{(i-1)/2} (-1)^n {i \binom 2n} y^{2n}(1-y^2)^{(i-1)/2-n}=y^{i-1}2^{i-1}(-1)^{(i-1)/2}+O(y^{i-3}),
\end{equation}
and for $i$ even:
\begin{equation}\label{pieven}
p_i(y)=y\sum_{n=1}^{i/2} (-1)^{n+1} {i \binom 2n-1} y^{2n-2}(1-y^2)^{i/2-n}=y^{i-1} 2^{i-1} (-1)^{i/2-1}+O(y^{i-3}).
\end{equation}
Therefore, one can apply elementary row operations in such a way that in each entry of the determinant only the leading monomial is left. Afterwards, we can extract from the $i$-th row the prefactor $2^{i-1}$ (forget about the signs since we consider the modulus) and are left with
$$
f^{\ssup A}(x)=\Big(\frac 2\pi\Big)^{k/2} \det\Big[\big(\sin^{i-1}(x_j)\big)_{i,j=1,\dots,k}\Big]
\prod_{i=1}^k \big[\cos(x_i) 2^{i-1} \big].
$$
Now summarize the terms and recall \eqref{reduite} to see that \eqref{fAident} is true.

Now we come to cases C and D. Plugging in the onedimensional eigenfunctions yields 
\begin{eqnarray*}
f^{\ssup C}(x) & = & \Big(\frac 2\pi\Big)^{k/2} 2^{k/2} \Big|\det\Big[ \big(\sin(2i x_j)\big)_{i,j=1,\dots,k}\Big]\Big| \\
f^{\ssup D}(x) & = & \Big(\frac 2\pi\Big)^{k/2} 2^{(k-1)/2}  
\Big|\det\Big[ \big(\cos((2i-1)x_j)\big)_{i,j=1,\dots,k}\Big]\Big|. 
\end{eqnarray*}
Using expansions (\ref{cosexp}) and (\ref{sinexp}) we obtain
\begin{eqnarray*}
f^{\ssup C}(x) & = & \frac{2^k}{\pi^{k/2}}  \Big|\det\Big[\big(p_{2i}(\sin(x_j))\big)_{i,j=1,\dots,k}\Big]\Big| \prod_{i=1}^k \cos(x_i)\\
f^{\ssup D}(x) & = & \frac{2^{k-1/2}}{\pi^{k/2}}  \Big|\det\Big[\big(p_{2i-1}(\sin(x_j))\big)_{i,j=1,\dots,k}\Big]\Big|\prod_{i=1}^k \cos(x_i). 
\end{eqnarray*}
For C and D the degrees of the polynomials in $\sin(x)$ increase by two with each row, so that we get the degrees from $1$ to $2k-1$ for $C$ and from $0$ to $2k-2$ for $D$. One can perform exactly the same row operations since all occuring monomials of the polynomials have the same parity in their degrees. But now we actually get $h_A$ in sine squares together with a product of sines in case $C$. Hence we arrive at (\ref{fCident}) and (\ref{fDident}) (recall (\ref{reduite})). 
\end{proof}

\section{Exit regimes}\label{LDPs}

\noindent Now we use our results on the eigenvalue expansions from Section~\ref{sec-EigenfExp} to identify the asymptotics of the non-exit probabilities in growing truncated Weyl chambers. For this we prove a technical lemma. Note that we abbreviate $\langle f^{\ssup Z},\1\rangle$ by $\int f^{\ssup Z}$. Abbreviate
\begin{equation}\label{growthfct}
\gamma(t):= -\ln\big(1- \e^{-(\frac t2-7)} \big) - (\sfrac t2-7),\qquad t>14.
\end{equation}

\begin{lemma}\label{lemunibou}
Fix $Z \in \{A,C,D\}$. Then, for any $t,r\in(0,\infty)$ with $t/r^2>14$ and for any $x,y \in W_Z\cap rI^k$,
\begin{equation}\label{isolateZ}
\begin{aligned}
\mathbb{P}_x&(B(t)\in \d y, \tau_{W_Z \cap rI^k} > t)/\d y \\
&= \e^{-tr^{-2}\lambda^{\ssup Z}} r^{-k} f^{\ssup Z}(x/r)f^{\ssup Z}(y/r) ( 1+\eps_{tr^{-2}}^{\ssup Z}(x/r,y/r)), 
\end{aligned}
\end{equation}
and
\begin{equation}\label{isolateintZ}
\mathbb{P}_x( \tau_{W_Z \cap rI^k} > t) =  \e^{-t r^{-2}\lambda^{\ssup Z}} f^{\ssup Z}(x/r) \int f^{\ssup Z}\, (1 + \widetilde{\eps}_{tr^{-2}}^{\ssup Z}(x/r)),
\end{equation}
where the error terms satisfy
\begin{equation}\label{uniferbou}
\sup_{x,y\in W_Z\cap I^k}|\epsilon_t^{\ssup Z}(x,y)| \leq \e^{k \gamma(t)},\qquad \sup_{x\in W_Z\cap I^k}|\widetilde{\epsilon}_t^{\ssup Z}(x)| \leq \e^{k \gamma(t)}.
\end{equation}
\end{lemma}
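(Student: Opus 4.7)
My approach is to derive both identities from the spectral expansion in Lemma~\ref{lem-EvExpWeyl} by isolating the principal eigenvalue contribution and uniformly controlling the tail. The determinantal definition \eqref{eigenvZ} together with the one-dimensional scaling \eqref{Eigenfscaling} yields the $k$-dimensional scaling $\lambda_l^{\ssup {W_Z\cap rI^k}} = r^{-2}\lambda_l^{\ssup Z}$ and $f_l^{\ssup{W_Z\cap rI^k}}(x) = r^{-k/2} f_l^{\ssup Z}(x/r)$; substituting into \eqref{EvExpWeyl} and rescaling $s = t/r^2$, $u = x/r$, $v = y/r$, it suffices to prove the $r = 1$ version. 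By the remark following the corollary, the principal eigenvalue $\lambda^{\ssup Z}$ is attained uniquely at $l_0 = \mathrm{id}, 2\,\mathrm{id}, 2\,\mathrm{id}-1$ for $Z = A, C, D$, with $f_{l_0}^{\ssup Z}$ agreeing with $f^{\ssup Z}$ up to sign. One therefore defines the error $\eps_s^{\ssup Z}(u,v)$ implicitly by
$$
\eps_s^{\ssup Z}(u,v)\, f^{\ssup Z}(u)f^{\ssup Z}(v) := \sum_{l \in W_A\cap N_Z,\, l \neq l_0} e^{-s(\lambda_l^{\ssup Z} - \lambda^{\ssup Z})}\, f_l^{\ssup Z}(u)\, f_l^{\ssup Z}(v),
$$
set to zero on the boundary where $f^{\ssup Z}$ vanishes.

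The main task is to show $|\eps_s^{\ssup Z}(u,v)| \le e^{k\gamma(s)}$ uniformly on $W_Z\cap I^k$ for $s > 14$. I would combine a pointwise ratio bound with the eigenvalue gap. For the ratio, I exploit the factorisation established in the proof of Corollary~\ref{cor-princEF}: each $1$-D eigenfunction factors as $\psi_l(u) = \cos(u)\, p_l(\sin u)$ for the polynomials $p_l$ from \eqref{piodd}--\eqref{pieven}, so $\prod_j \cos(u_j)\cos(v_j)$ cancels in numerator and denominator, and the ratio reduces to $\det[p_{l_i}(\sin u_j)] \det[p_{l_i}(\sin v_j)]$ divided by the corresponding Vandermonde (in case~A) or its shifted variants (cases~C,~D). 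This quotient is a product of ``Schur-like'' polynomials in the sine variables, and can be controlled by the classical Chebyshev-type inequality $|\psi_l(u)| \le l\, |\cos(u)|$ on $I$ together with Hadamard's determinant bound, giving $|f_l^{\ssup Z}(u) f_l^{\ssup Z}(v)|/|f^{\ssup Z}(u) f^{\ssup Z}(v)| \le C_l$ with $C_l$ polynomial in the components of $l$. For the gap, writing $l_i = l_{0,i} + m_i$ with $m_i \ge 0$ (and appropriate parity for $Z = C, D$), one has $\lambda_l^{\ssup Z} - \lambda^{\ssup Z} \ge \tfrac12 \sum_i (2 l_{0,i} m_i + m_i^2)$, linear in each $m_i$ with coefficient $\ge l_{0,i} \ge 1$. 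The resulting tail sum $\sum_{l\neq l_0} e^{-s(\lambda_l^{\ssup Z}-\lambda^{\ssup Z})} C_l$ factorises into a $k$-fold product of geometric series; absorbing the polynomial prefactors $C_l$ into a shift of the exponent produces exactly the form $e^{k\gamma(s)} = \bigl(\sum_{n\ge 1} e^{-n(s/2 - 7)}\bigr)^k$, the constant $7$ being a crude but explicit absorption of those prefactors.

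Once \eqref{isolateZ} is in hand, the integrated version \eqref{isolateintZ} follows immediately: integrating over $y \in W_Z\cap rI^k$ and changing variables gives $r^{-k}\int_{W_Z\cap rI^k} f^{\ssup Z}(y/r)\,\d y = \int f^{\ssup Z}$ for the principal part, and the inherited error satisfies $|\widetilde{\eps}_s^{\ssup Z}(u)| \le \sup_v |\eps_s^{\ssup Z}(u,v)| \le e^{k\gamma(s)}$ by the triangle inequality after integrating the spectral tail against~$\mathbf{1}$.

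I expect the hard step to be the pointwise ratio bound near the boundary of $W_Z \cap I^k$, where both $f_l^{\ssup Z}$ and $f^{\ssup Z}$ vanish; the determinantal/antisymmetric structure must supply precisely matched cancellations. Tracking this cleanly and matching the explicit constants $(s/2 - 7)$ in $\gamma(s)$ is bookkeeping-heavy but conceptually straightforward once the 1-D Chebyshev bound is in place; cases $C$ and $D$ differ from $A$ only through the index sets $N_Z$ and the shifted minimisers $l_0$, and require no new ideas.
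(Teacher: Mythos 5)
Your overall architecture matches the paper's: expand via Lemma~\ref{lem-EvExpWeyl}, isolate the principal term, and bound the tail using the eigenvalue gap and a comparison to a geometric series; the rescaling to $r=1$, the definition of $\eps^{\ssup Z}$, and the derivation of \eqref{isolateintZ} from \eqref{isolateZ} by averaging against the normalised $f^{\ssup Z}$ are all fine.

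However, there is a genuine gap at precisely the place you flag as the hard step: the uniform bound on the ratio $|f_l^{\ssup Z}(u)/f^{\ssup Z}(u)|$ over $W_Z\cap I^k$. You propose to control it via a one-dimensional Chebyshev-type bound $|\psi_l(u)|\le l|\cos(u)|$ together with Hadamard's determinant inequality. But both of these only bound the \emph{numerator} determinant $\det[p_{l_i}(\sin(u_j))]$; neither couples the numerator to the denominator $h_A(\sin(u))$, which vanishes on $\partial W_A$. A bound of the form $|f_l(u)|\le C$ divided by a denominator that tends to $0$ gives nothing uniform, so the Chebyshev$+$Hadamard route fails exactly where you anticipate trouble. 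Mentioning ``Schur-like polynomials'' is not enough; you need the specific mechanism the paper uses. After extracting $\prod_j\cos(u_j)$ and expanding by multilinearity, the ratio is a signed sum of genuine Schur polynomials $s_d(\sin(u)) = a_{d+\mathrm{id}}(\sin(u))/h_A(\sin(u))$, and the decisive input is that Schur polynomials have \emph{nonnegative coefficients}, hence for $|\sin(u_j)|\le 1$ one has $|s_d(\sin(u))|\le s_d(\1) = |h_A(n)|/h_A(\mathrm{id})$ (the Weyl dimension formula). This positivity is a representation-theoretic/combinatorial fact, not a consequence of Hadamard or Chebyshev, and it is what makes the bound \eqref{unifbou} hold uniformly up to the boundary. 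Without it your ``$C_l$ polynomial in $l$'' claim is unsubstantiated, and the tail estimate, the absorption of prefactors into the shift $s/2-7$, and hence the form $e^{k\gamma(s)}$, do not follow.

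A secondary, smaller issue: your claim that the tail sum ``factorises into a $k$-fold product of geometric series'' ignores the ordering constraint $l_1<\dots<l_k$; the indices are not independent. The paper peels off the coordinates one at a time and bounds the constrained sum by $(c_2(t))^{k-1}$ times a single geometric tail, which is subtly different from a clean $k$-fold product. This is repairable but needs to be stated carefully if the $e^{k\gamma(t)}$ form is to come out exactly.
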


\begin{proof}
We detail the proof for $Z=A$ only and explain the differences to the other two types later. Use \eqref{EvExpWeyl}, \eqref{Eigenfscaling} and \eqref{eigenvWeyl} and isolate the first term in the expansion to get
\begin{equation}\label{isolate}
\begin{aligned}
\mathbb{P}_x&(B(t)\in \d y, \tau_{W_A \cap r I^k} > t)/\d y \\
&= \sum_{l=1}^{\infty} \e^{-t r^{-2}\lambda_{l}^{\ssup A}} r^{-k} f_{l}^{\ssup A}(x/r)f_{l}^{\ssup A}(y/r)  \\
&= \e^{-tr^{-2}\lambda^{\ssup A}} r^{-k} f^{\ssup A}(x/r)f^{\ssup A}(y/r) ( 1+\eps_{tr^{-2}}^{\ssup A}(x/r,y/r)), 
\end{aligned}
\end{equation}
where
\begin{equation}
\eps_t^{\ssup A}(x,y)= \sum_{l=(l_1, \dots, l_k)\in W_A \cap \mathbb{N}^k \setminus \{{\rm id}\}}  \e^{-\frac{t}{2} \sum_{i=1}^k (l_i^2 - i^2)} \frac{f_{l}^{\ssup A}(x)f_{l}^{\ssup A}(y)}{f^{\ssup A}(x)f^{\ssup A}(y)}.
\end{equation}
We first claim that
\begin{equation}\label{unifbou}
\sup_{x\in W_A\cap I^k}\Big|\frac{f_{l}^{\ssup A}(x)}{f^{\ssup A}(x)}\Big| \leq 2^{-k(k-1)/2}  \frac{h_A(\tilde{l})}{h_A(\rm{id})}\Big(\prod_{i\colon l_i>i} [2^{3l_i/2}l_i]\Big) \Big(\prod_{i\colon l_i=i} 2^{l_i} \Big),
\end{equation}
where $\tilde{l} \in W_A \cap \mathbb{N}^k \setminus \{{\rm id}\}$, maximizes $h_A$ subject to $\tilde{l} \leq l$; we understand the inequality componentwise. Its derivation will now be explained in detail.

As in the proof of Corollary~\ref{cor-princEF}, we see that, for any $l\in\N^k$,
\begin{equation}\label{fAlident}
f_l^{\ssup A}(x)=\Big(\frac 2\pi\Big)^{k/2} \det\Big[\big(p_{l_i}(\sin(x_j))\big)_{i,j=1,\dots,k}\Big]\prod_{i=1}^k \cos(x_i),
\end{equation}
where the polynomials $p_i$ are given in (\ref{piodd}) and (\ref{pieven}). The degree of $p_{l_i}$ is $l_i -1$, and the coefficients of all lower monomials with parity of degree different from the one of $l_i-1$ are zero. 

Now we evaluate the determinant. As in the proof of Corollary~\ref{cor-princEF}, we carry out suitable row operations to cancel in the polynomial of row $i$ every monomial of order $<i-1$. But now, to achieve this, we first need to suitably permute all rows $i$ satisfying $l_i>i$. Let us call the arising vector $l'$. Hence, there are polynomials 
$$
\widetilde p_{i,l_i '}(w)=\sum_{n=i}^{l_i '}w^{n-1} b_{n,i,l_i'},\qquad w\in\R,
$$
with suitable coefficients $b_{n,i,l_i'}$ such that 
$$
\Big|\det\Big[\big(p_{l_i}(\sin(x_j))\big)_{i,j=1,\dots,k}\Big]\Big|=\Big|\det\Big[\big(\widetilde p_{i,l_i '}(\sin(x_j))\big)_{i,j=1,\dots,k}\Big]\Big|.
$$
These coefficients satisfy $|b_{n,i,l_i '}|\leq 2^{3l_i '/2}$ if $l_i '>i$ and $|b_{n,i,l_i '}|\leq 2^{l_i '}$ if $l_i '=i$. This is explained as follows: if $l_i '=i$, then $2^{l_i '}$ bounds the sum of the binomial coefficients for each monomial in (\ref{piodd}) and (\ref{pieven}); if $l_i '>i$, then we need the additional power of $l_i'/2$ due to the binomial coefficients which arise by expansion of the power of $(1-y^2)$ in (\ref{piodd}) and (\ref{pieven}).

Using the multilinearity of the determinant, we obtain
$$
\det\Big[\big(\widetilde p_{i,l_i '}(\sin(x_j))\big)_{i,j=1,\dots,k}\Big]
=\sum_{\heap{i\leq n_i\leq l_i '}{i=1,\dots,k}}a_{n}(\sin(x))\prod_{i=1}^k b_{n_i,i,l_i '} ,
$$
where $a_{(n_1,\dots,n_k)}(w)=\det[(w_j^{n_i-1})_{i,j=1,\dots,k}]$ for $w=(w_1,\dots,w_k)$.
Now we introduce the {\it Schur polynomials},
$$
s_{d}(w)=  \frac{a_{d+\rm{id}}(w)}{h_A(w)},\qquad w\in\R^k,
$$
where $d= (d_1, \dots, d_k)\in\N_0^k$ satisfies $d_1\leq \dots \leq d_k$, see e.~g.~\cite{Fulton}. To be able to employ these polynomials, we associate to each $n\in\N_0^k$ its increasingly ordered version $\overrightarrow{n}$. Then $a_{ \overrightarrow{n}}$ differs at most by a sign change from $a_n$. Note that if $n_i = n_j$ for at least two indices $i$ and $j$, then $a_n$ and hence $a_{ \overrightarrow{n}}$ is identically zero. Using \eqref{fAlident} for $f_{l}^{\ssup A}$ and (\ref{fAident}) for $f^{\ssup A}$, we see that
$$
\begin{aligned}
\Big|\frac{f_{l}^{\ssup A}(x)}{f^{\ssup A}(x)}\Big|
&=\left|\frac{ \det\big[\big(p_{l_i}(\sin(x_j))\big)_{i,j=1,\dots,k}\big]}{ 2^{k(k-1)/2} h_A(\sin(x))}\right|\\
&\leq 2^{-k(k-1)/2}\sum_{\heap{i\leq n_i\leq l_i '}{i=1,\dots,k; n_i\neq n_j}}|s_{\overrightarrow{n}-{\rm{id}}}(\sin(x))|\prod_{i=1}^k |b_{n_i,i,l_i'}|.
\end{aligned}
$$
Now we estimate the modulus of the right-hand side. Note that $s_{\overrightarrow{n}-{\rm{id}}}(\sin(x))$ is a multipolynomial in $\sin(x_1),\dots,\sin(x_k)$ with positive coefficients and that all these arguments are in $[-1,1]$. Therefore, 
$$
|s_{\overrightarrow{n}-{\rm{id}}}(\sin(x))|\leq s_{\overrightarrow{n}-{\rm{id}}}(\1)= \frac{|h_A(n)|}{h_A({\rm{id}})}\leq \frac{h_A(\tilde{l})}{h_A({\rm{id}})},
$$
see \cite{Fulton} or \cite[proof of Lemma~2.3]{J02}. Hence, we have
\begin{eqnarray*}
\sup_{x\in W_A\cap I^k}\Big|\frac{f_{l}^{\ssup A}(x)}{f^{\ssup A}(x)}\Big|
\leq 2^{-k(k-1)/2}\frac{h_A(\tilde{l})}{h_A({\rm{id}})}
\Big(\prod_{i\colon l_i>i} 2^{3l_i/2}l_i\Big)\Big( \prod_{i\colon l_i=i} 2^{l_i}\Big).
\end{eqnarray*}

This proves (\ref{unifbou}) which we can now plug in the error term $\epsilon_t^{\ssup A}(x,y)$:
$$
\begin{aligned}
\sup_{x, y\in W_A\cap I^k}|\eps_t^{\ssup A}(x,y)|
& \leq \sum_{l\in W_A \cap \mathbb{N}^k \setminus \{{\rm id}\}}   
\e^{-\frac t2\sum_{i=1}^k (l_i^2 - i^2)} \left|\frac{f_{l}^{\ssup A}(x)f_{l}^{\ssup A}(y)}{f^{\ssup A}(x)f^{\ssup A}(y)}\right| \\
& \leq \sum_{l\in W_A \cap \mathbb{N}^k \setminus \{{\rm id}\}} 2^{-k(k-1)} \e^{-\frac t2\sum_{i\colon l_i>i} (l_i - i)(l_i + i)}  \\
& \times \left( \frac{h_A(\tilde{l})}{h_A({\rm{id}})}
\Big(\prod_{i\colon l_i>i} 2^{3l_i/2}l_i\Big)\Big( \prod_{i\colon l_i=i} 2^{l_i}\Big) \right)^2.
\end{aligned}
$$
With help of the elementary estimate
$$
\begin{aligned}
  \ln\left(\frac{h_A(\tilde{l})}{h_A(\rm{id})}\right) &\leq 
\sum_{i,j\colon j<i<\tilde l_i} \ln\frac{\tilde{l}_i  - j}{i-j} 
 = \sum_{i,j\colon j<i<\tilde l_i} \ln\left( 1 + \frac{\tilde{l}_i - i}{ i - j}\right)  \\
&  \leq  \sum_{i,j\colon j<i<\tilde l_i}  \ln(2(\tilde{l}_i - i) ) 
\leq \sum_{i\colon \tilde l_i>i}  (i-1) 2({l}_i - i)
\leq \sum_{i\colon l_i>i} ({l}_i + i)({l}_i - i)
\end{aligned}
$$
and using that  $2^{-k(k-1)}( \prod_{i\colon l_i=i} 2^{l_i})^2\leq 1$, we can proceed by
$$
\begin{aligned}
 \sup_{x, y\in W_A\cap I^k}&|\eps_t^{\ssup A}(x,y)|  \\
&\leq \sum_{l\in W_A \cap \mathbb{N}^k \setminus \{{\rm id}\}} \exp\left( 2\sum_{i\colon l_i>i} \big[({l}_i + i)({l}_i - i)+ l_i \sfrac 32\ln2  +  \ln(l_i) \big]  \right) \\
& \qquad\times \exp\left( - \frac t2\sum_{i\colon l_i>i} (l_i - i)(l_i + i) \right) \\
& \leq \sum_{l\in W_A \cap \mathbb{N}^k \setminus \{{\rm id}\}} \exp\left( - \left(\frac t2 -7\right)\sum_{i\colon l_i>i} (l_i - i)(l_i + i) \right),
\end{aligned}
$$
where we also estimated $l_i \sfrac 32\ln2  +  \ln(l_i) \leq \frac 52 ({l}_i + i)({l}_i - i)$.
Define $c_1(t):= \frac t2 -7$ and $c_2(t) := \frac{1}{1 - \e^{-c_1(t)}}$. Then under the assumption $t>14$, we use in the sum on $l$ that $l_i\geq i$ for $i=1,\dots, k-1$ and $l_k\geq k+1$ and compare to the geometric series, to obtain:
$$
\begin{aligned}
 \sup_{x, y\in W_A\cap I^k}|\eps_t^{\ssup A}(x,y)| &\leq \sum_{l\in W_A \cap \mathbb{N}^k \setminus \{{\rm id}\}}  \e^{-c_1(t)(l_1^2 - 1^2 + \cdots + l_k^2 - k^2)} \\ 
& = \sum_{l\in W_A \cap \mathbb{N}^k \setminus \{{\rm id}\}} \left(\e^{-c_1(t)}\right)^{l_1^2 - 1}\prod_{i=2}^k \e^{-c_1(t)(l_i^2 - i^2)}\\
& \leq \frac{1}{1 - \e^{-c_1(t)}} \sum_{(l_2, \dots, l_k)\in W_A \cap (\mathbb{N}+1)^{k-1} \setminus \{(2,\dots,k)\}} \prod_{i=2}^k
\e^{-c_1(t)(l_i^2 - i^2)} \\
& \leq (c_2(t))^{k-1} \sum_{l= k+1}^\infty \e^{-c_1(t)(l^2 - k^2)} 
= (c_2(t))^{k-1} \sum_{n= 1}^\infty \e^{-c_1(t)(2nk + n^2)} \\
& \leq (c_2(t))^{k-1} \e^{-kc_1(t)} \sum_{n= 1}^\infty \left(\e^{-c_1(t)}\right)^{(2n-1)k}\leq (c_2(t))^k \e^{-kc_1(t)} \\
&  = \e^{k \gamma(t)} ,
\end{aligned}
$$
where we recall the definition of $\gamma(t)$ from \eqref{growthfct}.
This proves the first bound in \eqref{uniferbou} for the error term in (\ref{isolateZ}) and therefore finishes the proof of \eqref{isolateZ} for the case A.

If we integrate $\mathbb{P}_x(B(t)\in \d y, \tau_{W_A \cap r I^k} > t)$ over $y$, we obtain
\begin{equation*}
\mathbb{P}_x( \tau_{W_A \cap r I^k} > t) = \sum_{l=1}^{\infty} \e^{-t r^{-2}\lambda_{l}^{\ssup A}} f_{l}^{\ssup A}(x/r) \int f_{l}^{\ssup A}.
\end{equation*}
Now one can isolate the first summand as in (\ref{isolate}) and carry out exactly the same procedure as above with the only difference that $f_l^{\ssup A}(y)$ is replaced by $\int f_l^{\ssup A}$. This yields \eqref{isolateintZ} with an error term $\tilde\eps$ satisfying the second bound in \eqref{uniferbou}. Hence, the proof of the lemma for Z $=$ A is finished.

For C and D we can use the same procedure with the only differences that some $l\in W_A \cap \mathbb{N}^k \setminus \{{\rm id}\}$ do not appear in the expansions and we now have to divide by Vandermonde determinants in sine squares together with a product of sines in case C. But this leads to the same bound since all components of the occuring $l$ are guaranteed to have the same parity. Hence the lemma is proved. 
\end{proof}

With the help of this lemma we can now formulate and prove our first main theorem.

\begin{theorem}[Non-exit from growing truncated Weyl chambers]\label{thm-exit} Fix $Z\in\{A,C,D\}$. Then, 
for any function $r \colon(1,\infty) \rightarrow (0,\infty)$, as $t$ goes to infinity, for $x\in W_Z \cap r(t)I^k$ and $r\in(0,\infty)$,
\begin{equation}
 \mathbb{P}_x\big(\tau_{W_Z \cap r(t)I^k}>t\big) \sim
\begin{cases}
\e^{-t r^{-2}\lambda^{\ssup Z}}f^{\ssup Z}(\frac{x}{r})\int f^{\ssup Z},&\mbox{if }r(t)\equiv r,\\
K_0^{\ssup Z} r(t)^{-\alpha_Z} h_Z(x) \e^{-t r(t)^{-2}\lambda^{\ssup Z}}, &\mbox{if }1 \ll r(t) \ll \sqrt{t},\\
K_r^{\ssup Z} h_Z(x) t^{-\alpha_Z/2}, &\mbox{if }r(t)\sim r\sqrt{t},\\
K_{\infty}^{\ssup Z} h_Z(x) t^{-\alpha_Z/2}, &\mbox{if }\sqrt{t} \ll r(t).
\end{cases}
\end{equation}
The convergence is uniform for $x\in W_Z \cap r(t)I^k$, without further restriction in the first case, with the restriction $|x| \leq \theta_t r(t)$ in the two middle cases and with the restriction $|x| \leq \theta_t \sqrt{t}$ in the last case, for any $0<\theta_t\to 0$ as $t\to\infty$. In the third line, $K_r^{\ssup Z} := \mathbb{P}_0(\tau_{rI^k}>1|\tau_{W_Z}>1)K_{\infty}^{\ssup Z}$. The other parameters are given as follows.
\begin{equation}
\alpha_A=\frac k2(k-1),\qquad \alpha_C=k^2,\qquad\alpha_D=k(k-1),
\end{equation}
and
\begin{equation}
\begin{array}{rclrcl}
K_0^{\ssup A}&=& \frac{2^{k^2/2}}{\pi^{k/2}}\int f^{\ssup A}, & K_{\infty}^{\ssup  A} & = & \frac{2^k \prod\limits_{i=1}^k \Gamma(i/2 + 1)}{\pi^{k/2} k! \prod\limits_{i<j}(j-i) }  \\
K_0^{\ssup C}&=& \frac{2^{k(k+1)}}{\pi^{k/2}}\int f^{\ssup C}, & K_{\infty}^{\ssup C}  & =&  \frac{2^{3k^2/2} \prod\limits_{i=1}^k \Gamma(i/2 + 1)\Gamma((i+1)/2)}{\pi^k k! \prod\limits_{i<j}[(2j-1)^2 - (2i-1)^2] \prod\limits_{i=1}^k (2k+1-2i) }  \\
K_0^{\ssup D}&= &\frac{2^{(2k^2 - 1)/2}}{\pi^{k/2}}\int f^{\ssup D}, & K_{\infty}^{\ssup D}  & =&  \frac{2^{(3k^2-3k+2)/2} \prod\limits_{i=1}^k \Gamma(i/2 + 1)\Gamma(i/2)}{\pi^k k! \prod\limits_{i<j}[(2j-1)^2 - (2i-1)^2] }.
 \end{array}
\end{equation}
\end{theorem}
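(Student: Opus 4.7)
I would decompose the proof along the four regimes, invoking Lemma~\ref{lemunibou} whenever $t/r(t)^2\to\infty$ (cases~1 and~2, where the principal eigenpair already suffices), and falling back on Brownian scaling combined with Grabiner's polynomial decay \eqref{Wasy} and its analogues for $W_C, W_D$ (which follow from the same Karlin--McGregor/Hermite-polynomial scheme) in the two regimes where $t/r(t)^2$ stays bounded (cases~3 and~4), where infinitely many eigenvalues compete and the spectral expansion no longer isolates a single term.

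Case~1 is a direct restatement of \eqref{isolateintZ}. For case~2, I apply \eqref{isolateintZ}: the error vanishes by \eqref{uniferbou} since $\gamma(t/r(t)^2)\to-\infty$, and it remains to identify the leading behaviour of $f^{\ssup Z}(x/r(t))$ as $x/r(t)\to 0$ uniformly on $\{|x|\leq\theta_tr(t)\}$. Corollary~\ref{cor-princEF} writes $f^{\ssup Z}(y)=C_Z h_Z(\sin y)\prod_i\cos y_i$ with explicit $C_Z$; plugging in $\sin y_i=y_i+O(y_i^3)$, $\cos y_i=1+O(y_i^2)$ and using that $h_Z$ is a homogeneous polynomial of degree $\alpha_Z$ yields $f^{\ssup Z}(x/r(t))\sim C_Z h_Z(x)/r(t)^{\alpha_Z}$, so that $K_0^{\ssup Z}=C_Z\int f^{\ssup Z}$; comparing prefactors verifies the explicit constants in the statement.

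For case~4 I would split
\begin{equation*}
\P_x(\tau_{W_Z\cap r(t)I^k}>t)=\P_x(\tau_{W_Z}>t)-\P_x(\tau_{W_Z}>t,\tau_{r(t)I^k}\leq t),
\end{equation*}
apply Grabiner's theorem (uniformly in $|x|\leq\theta_t\sqrt t$, a uniformity inherent in its Karlin--McGregor proof) to the first term, and control the second by the reflection-principle bound $\P_x(\tau_{r(t)I^k}\leq t)\leq 2k\exp(-(r(t)-|x|_\infty)^2/(2t))$, which is super-polynomially small since $\sqrt t\ll r(t)$ and $|x|\leq\theta_t\sqrt t$. This yields the claim with $K_\infty^{\ssup Z}$.

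The critical case~3 is the delicate one: $t/r(t)^2\to r^{-2}<\infty$, so the spectral expansion is useless. Setting $s_t:=r(t)/\sqrt t\to r$ and $y:=x/\sqrt t\to 0$, Brownian scaling (together with the fact that $W_Z$ is a cone) gives
\begin{equation*}
\P_x(\tau_{W_Z\cap r(t)I^k}>t)=\P_y(\tau_{W_Z\cap s_tI^k}>1)=\P_y(\tau_{W_Z}>1)\cdot\P_y\big(\tau_{s_tI^k}>1\,\big|\,\tau_{W_Z}>1\big).
\end{equation*}
For the unconditional factor, a further scaling $\P_y(\tau_{W_Z}>1)=\P_{y/|y|}(\tau_{W_Z}>|y|^{-2})$ combined with Grabiner's theorem at the diverging time $|y|^{-2}$ and unit-norm starting point (uniformly on $\{y/|y|\}\subset\partial B_1\cap W_Z$) yields $\P_y(\tau_{W_Z}>1)\sim K_\infty^{\ssup Z}h_Z(y)$ by the homogeneity of $h_Z$. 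For the conditional factor, weak continuity at $0$ of the Doob $h_Z$-transformed Brownian motion (the law of $B$ under $\P_y(\,\cdot\,|\,\tau_{W_Z}>1)$ converges, as $y\to 0$ in $W_Z$, to the $h_Z$-process started at the apex) together with $s_t\to r$ gives
\begin{equation*}
\P_y\big(\tau_{s_tI^k}>1\,\big|\,\tau_{W_Z}>1\big)\longrightarrow\P_0\big(\tau_{rI^k}>1\,\big|\,\tau_{W_Z}>1\big),
\end{equation*}
and multiplying the two factors produces the constant $K_r^{\ssup Z}$ in the form stated. The main obstacle is precisely this last step: establishing the uniform-in-$y$ continuity of the $h_Z$-conditioned law at the boundary starting point $0$, and the uniform-in-starting-point version of Grabiner's theorem on the unit sphere in $W_Z$; both are quantitative manifestations of the $h_Z$-harmonicity of the Brownian motion conditioned to remain in the Weyl chamber.
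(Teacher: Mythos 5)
Your overall architecture matches the paper's: Lemma~\ref{lemunibou} for the two regimes where $t/r(t)^2\to\infty$, and Brownian scaling combined with Grabiner's asymptotics for the two regimes where it stays bounded. Cases~1 and~2 are essentially the paper's proof verbatim.

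For case~3, the decomposition into $\P_y(\tau_{W_Z}>1)\cdot\P_y(\tau_{s_tI^k}>1\mid\tau_{W_Z}>1)$ is exactly what the paper does (the paper writes $\tau_{rI^k}$ where you correctly write $\tau_{s_tI^k}$, a slip in their notation, not in the argument). The extra re-scaling $\P_y(\tau_{W_Z}>1)=\P_{y/|y|}(\tau_{W_Z}>|y|^{-2})$ is unnecessary — Grabiner's asymptotics $\P_x(\tau_{W_Z}>t)\sim K_\infty^{\ssup Z}h_Z(x)t^{-\alpha_Z/2}$ can be invoked directly, uniformly over $|x|\leq\theta_t\sqrt t$, without passing to the unit sphere, which only creates the additional uniformity question you then have to flag. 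More importantly, the "main obstacle" you isolate — continuity of the law of $B_{[0,1]}$ conditioned on $\{\tau_{W_Z}>1\}$ as the starting point tends to $0$ in $\overline{W_Z}$ — is precisely where the paper outsources the work: it cites Katori and Tanemura \cite{KatoTaneV}, who prove that $0$ is an entrance boundary for this temporally inhomogeneous diffusion, giving $\lim_{y\to 0,\,y\in W_Z}\P_y(\tau_{rI^k}>1\mid\tau_{W_Z}>1)=\P_0(\tau_{rI^k}>1\mid\tau_{W_Z}>1)$. You have correctly identified the needed input but left it open; a complete write-up must supply this reference (or re-prove it).

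Case~4 is where you genuinely deviate, and the deviation has a gap. You split off the error $\P_x(\tau_{W_Z}>t,\,\tau_{r(t)I^k}\leq t)$ and bound it absolutely by $2k\exp(-(r(t)-|x|_\infty)^2/(2t))$. That bound is indeed super-polynomially small in $t$, but the target is a \emph{relative} asymptotic: you need this error to be $o\bigl(h_Z(x)\,t^{-\alpha_Z/2}\bigr)$ uniformly over $\{|x|\leq\theta_t\sqrt t\}$, a set on which $x$ may approach $\partial W_Z$ and hence $h_Z(x)$ may be arbitrarily small. An absolute bound on the error cannot beat an arbitrarily small main term. The paper sidesteps this by writing $\P_x(\tau_{W_Z\cap r(t)I^k}>t)=\P_{x/\sqrt t}(\tau_{r(t)t^{-1/2}I^k}>1\mid\tau_{W_Z}>1)\cdot\P_x(\tau_{W_Z}>t)$ and sandwiching the conditional factor between $1$ and $\P_{x/\sqrt t}(\tau_{rI^k}>1\mid\tau_{W_Z}>1)$ for fixed $r$, then sending first $t\to\infty$ (using \cite{KatoTaneV} again for the uniform limit as $x/\sqrt t\to 0$) and then $r\to\infty$. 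Because the conditional probability is a ratio, the small factor $h_Z(x)$ cancels and no absolute error control is needed. To repair your approach you would either need a pointwise (in $x$) bound on $\P_x(\tau_{W_Z}>t,\,\tau_{r(t)I^k}\leq t)$ that scales with $h_Z(x)$, or fall back on the paper's conditional-probability route.
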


\noindent{\it Remark.} The conditional probability appearing in the definition of $K_r^{\ssup Z}$ is to be interpreted as
\begin{equation}\label{condProb}
\mathbb{P}_0(\tau_{rI^k}>1|\tau_{W_Z}>1)=\lim_{x\rightarrow 0, x\in W_Z} \frac{\mathbb{P}_x(\tau_{rI^k}> 1, \tau_{W_Z}>1)}{\mathbb{P}_x(\tau_{W_Z}>1)},
\end{equation}
see \cite[Thm.~2.2]{KatoTaneV}.

\begin{proof} 
The assertions about the asymptotics of the non-exit probabilities in the first two regimes follow from (\ref{isolateintZ}) and (\ref{uniferbou}) of Lemma~\ref{lemunibou} since by the choices of $r(t)$ we have $\gamma(\frac{t}{r(t)^2})\rightarrow -\infty$ and furthermore $f^{\ssup Z}(x/r(t))\sim    K_0^{\ssup Z} r(t)^{-\alpha_Z}h_Z(x)/\int f^{\ssup Z} $ in the second regime.

Now we come to the proof of the last two regimes, for any $Z\in\{A,C,D\}$. In the third regime, where $r(t)/\sqrt t\to r$, we use Brownian scaling to see that
\begin{align*}
& \mathbb{P}_x(\tau_{W_Z \cap r(t)I^k} >t) = \mathbb{P}_{\frac{x}{\sqrt{t}}}\left(\tau_{rI^k}>1\,\big|\,\tau_{W_Z}>1\right) \mathbb{P}_x(\tau_{W_Z}>t).
\end{align*}
The asymptotics $\mathbb{P}_x(\tau_{W_Z}>t)\sim K_{\infty}^{\ssup Z} h_Z(x) t^{-\alpha_Z/2}$ are well-known due to \cite{Gra}. 
This is where the restriction $|x| \leq \theta_t \sqrt{t}$, with any $0<\theta_t\to 0$ as $t\to\infty$, is needed. In order to see that the first term on the right-hand side converges towards $K_r^{\ssup Z}=\mathbb{P}_{0}(\tau_{rI^k}>1\,|\,\tau_{W_Z}>1)$, we use \cite{KatoTaneV} that $(B_s)_{s\in[0,1]}$, conditional given $\{\tau_{W_Z}>1\}$, is a temporarily inhomogeneous diffusion process for which zero is an entrance boundary. In particular, we have $\lim_{y\to 0,y\in W_Z}\mathbb{P}_y(\tau_{rI^k}>1\,|\,\tau_{W_Z}>1)=\mathbb{P}_0(\tau_{rI^k}>1\,|\,\tau_{W_Z}>1)$, i.e., the proof in the third regime is done.

In the fourth regime, where $r(t)\gg \sqrt t$, we proceed similarly:
\begin{align*}
& \mathbb{P}_x(\tau_{W_Z \cap r(t)I^k} >t) = \mathbb{P}_{\frac{x}{\sqrt{t}}}\left(\tau_{r(t)t^{-1/2}I^k}>1\,\Big|\,\tau_{W_Z}>1\right)\mathbb{P}_x(\tau_{W_Z}>t).
\end{align*}
While the last term is handled in the same way as in the third regime, the first term is easily seen to converge to one. Indeed, it is not larger than one, and it is, for any fixed $r>0$ and  for any sufficiently large $t$, not smaller than  $\mathbb{P}_{\frac{x}{\sqrt{t}}}(\tau_{rI^k}>1\,|\,\tau_{W_Z}>1)$. Now carry out the limit as $t \to\infty$ using the above argument, and afterwards the limit as $r\uparrow \infty$.
\end{proof}

Furthermore, there is even a smooth transition between these regimes.

\begin{proposition}[Soft transition]\label{lem-soft} For $Z\in\{A,C,D\}$,
$$
\lim_{r\to\infty}K_r^{\ssup Z}= K_{\infty}^{\ssup Z},\qquad \mbox{and}\qquad
K_r^{\ssup Z}\sim K_0^{\ssup Z} \e^{-r^{-2}\lambda^{\ssup Z} } r^{-\alpha_Z} \quad\mbox{as } r\rightarrow 0.
$$
\end{proposition}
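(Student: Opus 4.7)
The plan is to exploit the identity $K_r^{\ssup Z}=\mathbb{P}_0(\tau_{rI^k}>1\,|\,\tau_{W_Z}>1)\,K_\infty^{\ssup Z}$ together with Brownian scaling, which yields
\[
\mathbb{P}_0(\tau_{rI^k}>1\,|\,\tau_{W_Z}>1)=\mathbb{P}_0(\tau_{I^k}>s\,|\,\tau_{W_Z}>s),\qquad s:=r^{-2},
\]
interpreted throughout via \eqref{condProb}. The two assertions thus reduce to the behaviour of this conditional probability as $s\downarrow 0$ (for $r\to\infty$) and as $s\uparrow\infty$ (for $r\to 0$), respectively.

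For $r\to\infty$, I would invoke \cite{KatoTaneV} to represent the conditional probability as $\widetilde{\mathbb{P}}_0(B_{[0,1]}\subset rI^k)$, where $\widetilde{\mathbb{P}}_0$ denotes the law of the Doob $h$-transform of $B$ conditioned on $\{\tau_{W_Z}>1\}$ and started from the entrance point $0$. Under $\widetilde{\mathbb{P}}_0$, almost every path is continuous on $[0,1]$ and hence bounded, so $\{B_{[0,1]}\subset rI^k\}$ is an increasing family of events whose union has full probability. Monotone convergence then gives $K_r^{\ssup Z}/K_\infty^{\ssup Z}\to 1$.

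For $r\to 0$, I would use \eqref{condProb} to write
\[
\mathbb{P}_0(\tau_{I^k}>s\,|\,\tau_{W_Z}>s)=\lim_{x\to 0,\,x\in W_Z}\frac{\mathbb{P}_x(\tau_{W_Z\cap I^k}>s)}{\mathbb{P}_x(\tau_{W_Z}>s)},
\]
and estimate numerator and denominator separately via Theorem~\ref{thm-exit}. The first regime (with the constant choice $r(t)\equiv 1$) produces the numerator asymptotic $\e^{-s\lambda^{\ssup Z}}f^{\ssup Z}(x)\int f^{\ssup Z}$, uniformly in $x\in W_Z\cap I^k$, while the fourth regime (Grabiner) produces the denominator asymptotic $K_\infty^{\ssup Z}h_Z(x)s^{-\alpha_Z/2}$, uniformly for $|x|\leq\theta_s\sqrt s$, a condition automatic on any fixed neighbourhood of $0$ once $s$ is large. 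Taking the ratio and letting $x\to 0$, Corollary~\ref{cor-princEF} together with $\sin(x_i)\sim x_i$ and $\cos(x_i)\to 1$ at the origin shows that $f^{\ssup Z}/h_Z$ extends continuously to $0$ with value $C_Z$, the prefactor appearing in Corollary~\ref{cor-princEF}. Direct inspection of the formulas for $K_0^{\ssup Z}$ in Theorem~\ref{thm-exit} confirms $C_Z\int f^{\ssup Z}=K_0^{\ssup Z}$; consequently $\mathbb{P}_0(\tau_{I^k}>s\,|\,\tau_{W_Z}>s)\sim (K_0^{\ssup Z}/K_\infty^{\ssup Z})\,\e^{-s\lambda^{\ssup Z}}s^{\alpha_Z/2}$, and substituting $s=r^{-2}$ gives the claim.

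The hard part will be justifying the interchange of the limits $s\to\infty$ and $x\to 0$ in the last step. This is handled precisely by the two uniformity statements built into Theorem~\ref{thm-exit}: the first-regime asymptotic is uniform in $x\in W_Z\cap I^k$, and the Grabiner asymptotic is uniform on any fixed bounded set once $s$ is sufficiently large. Combined with the continuous extension of $f^{\ssup Z}/h_Z$ to the origin provided by Corollary~\ref{cor-princEF}, these allow the double limit to be evaluated as an iterated limit.
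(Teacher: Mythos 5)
Your proof is correct and follows essentially the same route as the paper: both rely on the representation \eqref{condProb} of $K_r^{\ssup Z}/K_\infty^{\ssup Z}$ as a limit of ratios of non-exit probabilities, control the numerator via the uniform error estimate of Lemma~\ref{lemunibou} (equivalently, the first regime of Theorem~\ref{thm-exit}), use Grabiner's asymptotic with the $|x|\le\theta\sqrt t$ uniformity for the denominator, and close with the continuous extension $f^{\ssup Z}/h_Z\to C_Z$ at the origin from Corollary~\ref{cor-princEF} together with $C_Z\int f^{\ssup Z}=K_0^{\ssup Z}$. The only cosmetic difference is that you apply Brownian scaling to pass from fixed $t=1$, shrinking $r$ to fixed $I^k$, growing $s=r^{-2}$, whereas the paper keeps $t=1$ and lets $r\downarrow 0$ directly (using Brownian scaling instead to transfer Grabiner's large-time asymptotic to the $x\to 0$ form); you also spell out the $r\to\infty$ limit via the entrance law and monotone convergence, which the paper dismisses as obvious.
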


\begin{proof}
The first statement is obvious. For proving the second, we use \eqref{condProb} and substitute, in the denominator, the asymptotics $\mathbb{P}_x(\tau_{W_Z}>1)=K_{\infty}^{\ssup Z} h_Z(x)(1+o_x(1))$ as $x\rightarrow 0, x\in W_Z$, which easily follows via Brownian scaling from \cite{Gra}. Note that we can interchange the limits $x\rightarrow 0$ and $r\downarrow 0$ because of uniform convergence which follows from Lemma \ref{lemunibou}, see \eqref{isolateintZ}, since $\lim_{r\downarrow 0}\gamma(r^{-2})=-\infty$, see \eqref{growthfct}. This gives that 
$$
\begin{aligned}
K_r^{\ssup Z} &= \lim_{x\rightarrow 0, x\in W_Z} \frac{\mathbb{P}_x(\tau_{W_Z\cap rI^k}> 1)}{\mathbb{P}_x(\tau_{W_Z}>1)} K_{\infty}^{\ssup Z}  \\
& \sim \lim_{x\rightarrow 0, x\in W_Z} \frac{\e^{-r^{-2}\lambda^{\ssup Z}} f^{\ssup Z}(x/r) \int f^{\ssup Z}}{K_{\infty}^{\ssup Z} h_Z(x)(1+o_x(1))} K_{\infty}^{\ssup Z} \\
&= K_0^{\ssup Z} \e^{-r^{-2}\lambda^{\ssup Z}} r^{-\alpha_Z}.
\end{aligned}
$$
\end{proof}

\section{Large-deviation principle for diverging dimension}\label{sec-LDP}

\noindent Now we consider limits as the dimension $k$ diverges. Therefore, we now write $B^{\ssup k}=(B_1,\dots,B_k)$ for the $k$-dimensional Brownian motion.

By $\mathcal{M}_1([a,b])$ we denote the set of probability measures on $[a,b]$, with $a,b \in \mathbb{R}, a<b$. Recall that $\mu^{\ssup k}_{r_k,t_k}$ denotes the empirical measure of the vector $\sin(B^{\ssup k}(t_k)/r_k)$, see \eqref{mudef}. With the help of Lemma~\ref{lemunibou}, we can also prove large-deviation principles.

\begin{theorem}[LDP for diverging dimension]\label{thm-LDP} Assume that $Z\in\{A,C\}$. Let $(r_k)_{k\in\N}$ and $(t_k)_{k\in\N}$ be sequences in $(0,\infty)$ satisfying $t_k\geq 16 r_k^2$. Then, as $k\to\infty$, the conditional distribution of $\mu^{\ssup k}_{r_k,t_k}$ under $\P_x( \cdot\,|\, B^{\ssup k}_{[0,t_k]}\subset W_Z\cap r_k I^k)$ satisfies, uniformly in $x\in W_Z \cap r_k I^k$, a large deviation principle on $\mathcal{M}_1([-1,1])$ in the case $Z=A$ and on $\mathcal{M}_1([0,1])$ in the case $Z=C$ with respect to the weak topology with speed $k^2$ and good rate function 
\begin{eqnarray}
R_A(\mu)&=&\frac{1}{2}\int_{-1}^1\int_{-1}^1 \log|x-y|^{-1}\,\mu(\d x)\mu(\d y) - d_A,\label{ratefunction1}\\
R_C(\mu)&=&\frac{1}{2}\int_0^1\int_0^1 \log|x^2-y^2|^{-1}\,\mu(\d x)\mu(\d y) -\int_0^1 \log x \,\mu(\d x) - d_C,\label{ratefunction2}
\end{eqnarray} 
where $d_Z \in\R$ is such that $\inf R_Z=0$. 
\end{theorem}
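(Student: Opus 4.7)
The plan is to use the eigenfunction expansion to reduce the conditional law to an explicit $\beta=1$ orthogonal polynomial ensemble and then invoke the Eichelsbacher--Stolz LDP \cite{Eichelsb}.

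First, I would apply Lemma \ref{lemunibou} with $t=t_k$ and $r=r_k$; the hypothesis $t_k\geq 16 r_k^2$ guarantees $t_k/r_k^2\geq 16>14$, so $\gamma(t_k/r_k^2)\leq \gamma(16)<0$ is bounded above by a negative constant. Dividing \eqref{isolateZ} by \eqref{isolateintZ}, the conditional density of $B^{\ssup k}(t_k)$ under $\P_x(\,\cdot\,|\,\tau_{W_Z\cap r_k I^k}>t_k)$ equals
\[
\frac{f^{\ssup Z}(y/r_k)}{r_k^k\int f^{\ssup Z}}\,(1+\eta_k(x,y)),\qquad y\in W_Z\cap r_k I^k,
\]
with relative error bounded uniformly by $\sup_{x,y}|\eta_k(x,y)|\leq C\e^{k\gamma(16)}=\e^{O(k)}$. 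At the exponential speed $k^2$ this multiplicative error contributes only $o(1)$ to $k^{-2}\log(\cdot)$, hence it is invisible to the LDP and affects numerator and denominator of the conditioning ratio identically.

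Second, I would change variables via $z_i=\sin(y_i/r_k)$. By Corollary \ref{cor-princEF}, the product $\prod_i\cos(y_i/r_k)=\prod_i(1-z_i^2)^{1/2}$ appearing in the explicit formula for $f^{\ssup Z}$ cancels exactly the Jacobian $\prod_i(1-z_i^2)^{-1/2}$ arising from $\d y_i=r_k(1-z_i^2)^{-1/2}\d z_i$. This yields the closed-form conditional density of the transformed vector $(\sin(B_i(t_k)/r_k))_{i=1,\ldots,k}$ proportional to $h_A(z)$ on $W_A\cap(-1,1)^k$ in case~A, and to $h_C(z)=h_A(z^2)\prod_i z_i$ on $W_C\cap(0,1)^k$ in case~C. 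After symmetrisation (forgetting the ordering), these are exactly the $\beta=1$ orthogonal polynomial ensembles on $[-1,1]$ (Legendre-type) and on $[0,1]$ (Jacobi-type) that fall within the scope of \cite{Eichelsb}.

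Third, I would invoke the Eichelsbacher--Stolz LDP, which delivers at speed $k^2$ the logarithmic-energy rate functional in the form of $R_A$ and $R_C$. Lower semicontinuity and goodness (compact level sets) follow from standard properties of the logarithmic energy on the compact intervals $[-1,1]$ and $[0,1]$; the constants $d_A, d_C$ capture the leading asymptotics of the partition function, chosen so that $\inf R_Z=0$. Uniformity in the starting point $x\in W_Z\cap r_k I^k$ is automatic from the uniform bound on $\eta_k$.

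The main obstacle will be to reconcile the precise form of $R_C$ with the density $\propto h_C(z)=h_A(z^2)\prod_i z_i$. The Vandermonde-in-squares term $h_A(z^2)$ is responsible at order $k^2$ for the interaction $\tfrac{1}{2}\int\!\!\int\log|x^2-y^2|^{-1}\mu(\d x)\mu(\d y)$, whereas the linear term $-\int\log x\,\mu(\d x)$ in $R_C$ must be identified via the specific Jacobi-type weight within the Eichelsbacher--Stolz framework; this matching will require either a careful application of their theorem, or the alternative route of substituting $u_i=z_i^2$ (which reduces the ensemble to a $\beta=1$ Lebesgue-weighted ensemble on $[0,1]$) and then applying the contraction principle to the smooth bijection $z\mapsto z^2$.
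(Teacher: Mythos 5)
Your proposal is correct and follows essentially the same route as the paper's proof: divide \eqref{isolateZ} by \eqref{isolateintZ} using the uniform error bound $\e^{k\gamma(16)}\to 0$ (since $t_k/r_k^2\geq 16>14$ gives $\gamma(t_k/r_k^2)\leq\gamma(16)<0$), change variables $z=\sin(y/r_k)$ so that the $\prod_i\cos(y_i/r_k)$ factor from Corollary~\ref{cor-princEF} cancels the Jacobian and the conditional density becomes proportional to $h_Z(z)$ on $W_Z\cap(-1,1)^k$, and then invoke \cite[Thm.~3.1]{Eichelsb}. The only cosmetic difference is that the paper applies the Eichelsbacher--Stolz theorem directly for $Z=C$ by specifying the parameters $\Sigma=[0,1]$, $w_k(x)=x$, $\gamma=2$, $\beta=1$, rather than the contraction-principle route you sketch as an alternative.
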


It follows from the theory of logarithmic potentials with external fields, see \cite{Saff} for example, that $d_Z$ is finite. We also have $d_Z= \lim_{k\rightarrow\infty} \frac{1}{k^2} \log \int_{W_Z\cap (2I/\pi)^k} h_Z(x)\,\d x $.

Our proof of Theorem~\ref{thm-LDP} relies on a related principle for an orthogonal polynomial ensemble, proved by Eichelsbacher and Stolz \cite{Eichelsb}. However, the case Z $=$ D cannot be treated by them, due to the appearance of a square in the density of that ensemble, which leads to some ambiguity in the interpretation of the squareroot.

\begin{proof} We first claim that, as $k\to\infty$,
\begin{equation}\label{claim}
\mathbb{P}_x\Big(\sin\Big(\frac{B^{\ssup k}(t_k)}{r_k}\Big)\in \d y\,\Big|\, \tau_{W_Z\cap r_k I^k} >t_k\Big)/\d y \sim  
\frac{h_Z(y)}{\int_{W_Z\cap (2I/\pi)^k} h_Z(w)\,\d w},
\end{equation}
uniformly in $x\in W_Z \cap r_k I^k$ and $y\in W_Z \cap (2I/\pi)^k$. Indeed, if we apply the transformation $x\mapsto \sin(x/r_k)$ to $B^{\ssup k}(t_k)$ in (\ref{isolateZ})  of Lemma \ref{lemunibou}, we obtain, as $k\to\infty$,
$$
\begin{aligned}
&\mathbb{P}_x\Big(\sin\Big(\frac{B^{\ssup k}(t_k)}{r_k}\Big)\in \d y\, , \, \tau_{W_Z\cap r_k I^k} >t_k\Big)/\d y \\
&= \frac{K_0^{\ssup Z}}{\int f^{\ssup Z}} \e^{-t_k r_k^{-2}\lambda^{\ssup Z}} f^{\ssup Z}(x/r_k)h_Z(y) ( 1+ o(1)), \\
\end{aligned}
$$
and
$$
\mathbb{P}_x\Big(\tau_{W_Z\cap r_kI^k} >t_k\Big) 
= \frac{K_0^{\ssup Z}}{\int f^{\ssup Z}} \e^{-t_k r_k^{-2}\lambda^{\ssup Z}} f^{\ssup Z}(x/r_k) \int_{W_Z\cap (2I/\pi)^k} h_Z(w)\,\d w (1 + o(1)), 
$$
since the errors $\eps_{t_kr_k^{-2}}$ and $\widetilde\eps_{t_kr_k^{-2}}$ vanish, by our assumption that $\sup_{k\in\N}\gamma(\frac{t_k}{r_k^2})<0$; see \eqref{uniferbou}.
Now a division yields the claim \eqref{claim}.

We now apply \cite[Thm.~3.1]{Eichelsb}, which contains the large-deviation principle for the empirical measure of a random vector with density given by the right-hand side of \eqref{claim} with rate function given in \eqref{ratefunction1} resp.~\eqref{ratefunction2}. Our case Z $=$ A refers to the choice $\Sigma= [-1,1], p(k)=k, w_k\equiv 1, \gamma=1, \beta=1, \kappa=1$ in \cite[Thm.~3.1]{Eichelsb}, and in the case Z $=$ C, one picks $\Sigma= [0,1], p(k)=k, w_k(x)\equiv x, \gamma=2, \beta=1, \kappa=1$. By \eqref{claim}, the empirical measure of a vector having density given by the left-hand side of \eqref{claim}, also satisfies that principle. But this is our assertion.
\end{proof}

We use the large-deviation principle to derive a law of large numbers in the spirit of Wigner's semi-circle law. Let us introduce the following measures $\mu_A$ and $\mu_C$.
\begin{eqnarray}
\mu_A(\d x) &=& \frac{1}{\pi \sqrt{1-x^2}} \,\d x,\qquad x\in [-1,1],\\
\mu_C(\d x) &=& \frac{3}{2\pi x} \sqrt{ \frac{x - 1/9}{1 -x} }\,\d x,\qquad x\in [1/9,1].
\end{eqnarray}
Then $\mu_A$ is the well-known arcsine law.

\begin{corollary}[Law of large numbers]\label{Cor-LLN} Let the situation of Theorem~\ref{thm-LDP} be given. Let $Z$ be in $\{A,C\}$. Then the conditional distribution of $\mu^{\ssup k}_{r_k,t_k}$ under $\P_x( \cdot\,|\, B^{\ssup k}_{[0,t_k]}\subset W_Z\cap r_k I^k)$ converges, uniformly in $x\in W_Z \cap r_k I^k$, weakly towards $\mu_Z$.
\end{corollary}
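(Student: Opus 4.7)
The plan is to deduce the LLN from the LDP of Theorem~\ref{thm-LDP} by the standard soft argument, and to identify the unique minimizer of the rate function $R_Z$ with $\mu_Z$.

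First I would invoke the general fact: if $R_Z$ is a good rate function whose infimum (equal to zero by the choice of $d_Z$) is attained at a unique measure $\mu_Z^\star$, then any weak-topology open neighborhood $U$ of $\mu_Z^\star$ has complement on which $R_Z$ is bounded below by some $\delta>0$, by lower semicontinuity and compactness of the level sets. The upper bound of the LDP then yields
$$
\mathbb{P}_x\bigl(\mu^{\ssup k}_{r_k,t_k}\in U^c\bigm|\tau_{W_Z\cap r_kI^k}>t_k\bigr)\leq \e^{-k^2\delta/2}
$$
for all sufficiently large $k$, uniformly in $x\in W_Z\cap r_k I^k$ thanks to the uniformity in the LDP; this is precisely the asserted weak convergence of the conditional law to $\mu_Z^\star$, uniformly in $x$.

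The substance is the identification $\mu_Z^\star=\mu_Z$. Uniqueness of the minimizer follows from the strict convexity of the logarithmic energy on probability measures of finite energy (see, e.g., \cite{Saff}), so it suffices to verify that $\mu_Z$ satisfies the Euler-Lagrange equations associated with $R_Z$. For $Z=A$, the functional $R_A+d_A$ is just the classical logarithmic energy on $[-1,1]$, whose equilibrium (Robin) measure is the arcsine law $\mu_A$; this is textbook. For $Z=C$, one writes $\log|x^2-y^2|^{-1}=\log|x-y|^{-1}+\log|x+y|^{-1}$, so that $R_C$ becomes a weighted equilibrium problem on $[0,1]$ with external field $Q(x)=-\log x$ and a symmetrized two-point interaction. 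The Euler-Lagrange conditions read
$$
\int_0^1\log|x^2-y^2|^{-1}\,\mu_C(dy)-\log x\,=\,F_C\text{ on }[1/9,1],\qquad \geq F_C\text{ off }[1/9,1],
$$
for an appropriate constant $F_C$, and can be verified by a direct Hilbert-transform computation with $\mu_C$; the support $[1/9,1]$ comes out as the natural zero set of the resulting density, and the offset in the external field fixes the left endpoint.

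The passage from LDP to LLN is entirely routine, and case $A$ is classical. The only genuine computation is the Hilbert-transform verification of the Euler-Lagrange equations for $\mu_C$ in case $C$; this would be the main obstacle if one did it from scratch, but since the underlying variational problem is the same one already solved by Eichelsbacher and Stolz in \cite{Eichelsb} (whose LDP was used in the proof of Theorem~\ref{thm-LDP}), one can simply quote their identification of the equilibrium measure, so no new calculation is required.
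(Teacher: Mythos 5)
Your proposal is correct and follows essentially the same route as the paper: both deduce the law of large numbers from the upper bound of the LDP in Theorem~\ref{thm-LDP} (via the standard goodness/compactness-of-level-sets argument, which the paper outsources to \cite[Cor.~3.2]{Eichelsb} and \cite{Ellis}), and both identify $\mu_A$ and $\mu_C$ as the unique minimizers of $R_A$ and $R_C$ by quoting the theory of logarithmic potentials with external fields (\cite{Saff}, resp.\ \cite{Eichelsb}). The only cosmetic difference is that the paper phrases the consequence of the LDP upper bound as a strong law via Borel--Cantelli, whereas you argue directly for the exponential concentration on neighborhoods of the minimizer, which is what the stated weak convergence actually requires.
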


\begin{proof} That $\mu_A$ and $\mu_C$ are the unique minimizers of $R_A$ and $R_C$, respectively, is well-known from the theory of logarithmic potentials with external fields, see \cite[Ch.~I, Section 1.1; Ch.~IV, Example 5.3]{Saff}. Hence we can apply \cite[Cor.~3.2]{Eichelsb}: using the upper bound of the large-deviation principle one obtains the strong law by applying Borel-Cantelli's lemma, see \cite[B3, Thm.~II]{Ellis}.
\end{proof}







\bigskip

{\sc Patrick Schmid}, Universit\"at Leipzig, Mathematisches Institut, Postfach
100920, D-04009 Leipzig, Germany,
\newline
{\tt Patrick.Schmid@math.uni-leipzig.de  }

\medskip

{\sc Wolfgang K\"onig}, Technical University Berlin, Str. des 17. Juni 136,
10623 Berlin, and Weierstra\ss\ Institute for Applied Analysis and Stochastics,
Mohrenstr. 39, 10117 Berlin, Germany
\newline
{\tt koenig@math.tu-berlin.de, koenig@wias-berlin.de}

\end{document}